\newcommand{\real}{\mathbb{R}}
\newcommand{\vn}{\bm{n}}
\newcommand{\vx}{\bm{x}}
\newcommand{\vu}{\bm{u}}
\newcommand{\vv}{\bm{v}}
\newcommand{\vf}{\bm{f}}
\newcommand{\vg}{\bm{g}}
\newcommand{\vz}{\bm{z}}
\newcommand{\vM}{\bm{M}}
\newcommand{\vA}{\bm{A}}
\newcommand{\vB}{\bm{B}}
\newcommand{\vD}{\bm{D}}
\newcommand{\vE}{\bm{E}}
\newcommand{\vK}{\bm{K}}
\newcommand{\vL}{\bm{L}}
\newcommand{\vP}{\bm{P}}
\newcommand{\vI}{\bm{I}}
\newcommand{\vQ}{\bm{Q}}
\newcommand{\vS}{\bm{S}}
\newcommand{\vone}{\bm{1}}
\newcommand{\vzero}{\bm{0}}
\newcommand{\vgamma}{\bm{\gamma}}
\newcommand{\vLambda}{\bm{\Lambda}}
\newcommand{\vq}{\bm{q}}
\newcommand{\cG}{\mathcal{G}}
\newcommand{\true}{\text{true}}
\newcommand{\avg}{\text{avg}}
\DeclareMathOperator{\diag}{diag}
\DeclareMathOperator*{\argmin}{argmin}
\newcommand{\Mb}[1]{\left[{#1}\right]}
\newcommand{\M}[1]{\left({#1}\right)}
\newtheorem{theorem}{Theorem}[section]
\newtheorem{lemma}{Lemma}[section]
\newtheorem{assumption}{Assumption}[section]
\newtheorem{proposition}{Proposition}[section]
\newtheorem{remark}{Remark}[section]
\author[L. Borcea \and F. Guevara~Vasquez \and A.V. Mamonov]
{Liliana Borcea \and Fernando Guevara~Vasquez \and Alexander V.
Mamonov}
\address{Mathematics, U. of Michigan, 2074 E Hall, 530 Church St, Ann
Arbor, MI 48109-1043}
\email{borcea@umich.edu}
\address{Mathematics, U. of Utah, 155 S 1400 E RM 233, Salt Lake City, UT 84112-0090}
\email{fguevara@math.utah.edu}
\address{Mathematics, U. of Houston, 4800 Calhoun Rd. Houston, TX, 77004}
\email{avmamonov@uh.edu}
\title[Discrete Liouville identity]{A discrete Liouville identity for numerical reconstruction of 
Schr\"odinger potentials}
\begin{document}

\begin{abstract} 
We propose a discrete approach for solving an inverse problem for
Schr\"odinger's equation in two dimensions, where the unknown potential
is to be determined from boundary measurements of the Dirichlet to
Neumann map. For absorptive potentials, and in the continuum, it is
known that by using the Liouville identity we obtain an inverse
conductivity problem.  Its discrete analogue is to find a resistor
network that matches the measurements, and is well understood.  Here we
show how to use a discrete Liouville identity to transform its solution
to that of  Schr\"odinger's problem.  The discrete Schr\"odinger
potential given by the discrete Liouville identity can be used to
reconstruct the potential in the continuum in two ways.  First, we can
obtain  a direct but coarse reconstruction by interpreting the values of
the discrete Schr\"odinger potential as averages of the continuum
Schr\"odinger potential on a special sensitivity grid.  Second, the
discrete Schr\"odinger potential may be used to reformulate the
conventional nonlinear output least squares optimization formulation of the
inverse Schr\"odinger problem. Instead of minimizing the boundary
measurement misfit, we  minimize the misfit between the discrete
Schr\"odinger potentials. This results in a better behaved optimization
problem that converges in a single Gauss-Newton iteration, and gives
good quality reconstructions of the potential, as illustrated by the
numerical results.
\end{abstract} 

\maketitle

\section{Introduction}
\label{sec:intro}
Consider the  boundary value problem 
\begin{equation}
 \begin{aligned}
 L_{\sigma,q} v \equiv -\nabla \cdot [ \sigma \nabla v ] + q v &= 0 
~\text{in}~\Omega,\\
 v & = f ~\text{on}~ \partial\Omega,
 \end{aligned}
 \label{eq:sq}
\end{equation}
in a  simply connected, bounded domain $\Omega\subset\real^2$ with $C^2$ 
boundary $\partial\Omega$, and  $f \in H^{1/2}(\partial\Omega)$. 
The positive and bounded coefficient $\sigma(\vx)$ is called the
conductivity and $q(\vx)$ is the Schr\"odinger potential.
They are the unknowns in inversion, to be determined from  the Dirichlet
to Neumann (DtN) map
$\Lambda_{\sigma,q} : H^{1/2}(\partial\Omega) \to
H^{-1/2}(\partial\Omega)$ defined by 
\begin{equation}
\Lambda_{\sigma,q} f = \sigma \vn \cdot
\nabla v |_{\partial\Omega},
\end{equation}
where  $\vn$ is the unit outer normal at $\partial \Omega$ and $v$
solves \eqref{eq:sq}.
The case $q=0$ is known as
the inverse conductivity or electrical impedance tomography problem. 
The case $\sigma=1$ is the inverse Schr\"odinger problem.

Our goal in this paper is to introduce a novel method, based on parametric model reduction, for the numerical reconstruction 
of the solution of the inverse Schr\"odinger problem ($\sigma = 1$) 
in the {\em absorptive} case $q \geq 0$.  Parametric model  reduction is mainly used for  approximating efficiently 
the response of dynamical systems for design, optimal control and uncertainty 
quantification \cite{benner2015survey}.  We are interested in parametric model reduction  for improving the 
inversion process. This is a largely unexplored direction, but recent progress has been made in 
\cite{borcea2014model,druskin2013solution} for parabolic equations, in \cite{mamonov2015nonlinear}
for the wave equation, in \cite{Borcea:2008:EIT,Borcea:2012:RNA,Borcea:2010:PRN,borcea2010circular} for the inverse 
conductivity problem and in \cite{borcea2005continuum} for a related
inverse spectral problem. The construction of the 
reduced models varies between problems because it must respect the
underlying physics. For example, 
the projection-based reduced models in \cite{mamonov2015nonlinear}  approximate the wave propagator and use causality, whereas the 
projection-based models in \cite{borcea2014model,druskin2013solution} for parabolic equations are obtained by rational 
approximation of the transfer function, the Laplace transform with respect to time of the measurement map. The 
parametric reduced models for the inverse conductivity problem are not projection-based. They are resistor networks 
constructed from very accurate approximations of the DtN map and the
resistances in the network play the role of the parameters in the
parametric model reduction. These
resistor network reduced models rely on the graph theory developed in 
\cite{Colin:1994:REP, Colin:1996:REP, Curtis:1989:CRN, Curtis:1994:FCC, Curtis:1990:DRN, 
Curtis:1991:DNM, Curtis:2000:IPE, Curtis:1998:CPG,
Ingerman:2000:Discrete}.

The advantage of using parametric reduced models for inversion is that  they can lead to iterative algorithms that 
perform much better than the usual least squares data fit methods.  They converge in 
one or two iterations and give quantitatively superior reconstructions. The disadvantage is that it is difficult to 
construct good  parametric reduced models. These must  retain the structure of the governing partial differential equation
so we can extract the unknown parameters from them, 
and capture correctly important phenomena such as the decreased sensitivity of the measurements to 
changes in the parameters away from the boundary in inverse elliptic and parabolic problems.

In this paper we show how to use the resistor networks with circular planar graphs, the reduced 
models for the inverse conductivity problem in \cite{Borcea:2008:EIT,Borcea:2010:PRN,borcea2010circular,Borcea:2012:RNA},  
to solve the inverse Schr\"odinger problem with absorptive potentials in two dimensions. 
The inverse conductivity and Schr\"odinger problems are connected in the continuum by a well known Liouville identity \cite{Sylvester:1987:GUT}. Here we 
show how to connect them in the discrete (network) setting. This is difficult because conductivities 
are defined on edges of the graph of the network, and the potential is associated with the nodes.  The parametric reduced models (networks) 
obtained  as in \cite{Borcea:2012:RNA}, which can be interpreted as five point stencil difference  schemes for Schr\"odinger's 
equation, encode information about the unknown potential $q$, but this is not  restricted to the diagonal of the finite 
difference operator, as expected.  Thus, it is not straightforward to obtain a reconstruction of $q$ from the network. 

Discrete Liouville identities have been proposed before
in \cite{Ingerman:2012:SE} and in \cite{Arauz:2014:DRM,Arauz:2015:OPB} for  the inverse
Schr\"odinger problem on networks.  However, these studies are not concerned with the 
connection with the continuum problem, and in fact it is not clear if the discrete Schr\"odinger problem considered in
\cite{Arauz:2014:DRM,Arauz:2015:OPB} is consistent with 
measurements of the DtN map in the continuum. 

Here we derive a discrete generalized Liouville identity for solving 
the continuum inverse Schr\"odinger  problem with networks. The networks,  with graph $\cG$,  are parametric reduced models that approximate the DtN map $\Lambda_{1,q}$.  
The Liouville identity is defined  on the line graph  $\widetilde \cG$ of  $\cG$, which establishes an isomorphism between the edges of $\cG$ where the conductivities lie and the nodes of 
$\widetilde \cG$ that support the Schr\"odinger potential. We use the Liouville identity to formulate a preconditioned 
Gauss-Newton inversion algorithm. The  preconditioner is obtained from the parametric reduced model and leads to an 
efficient method, as demonstrated   with numerical simulations.

\subsection{Contents}

We start in \S\ref{sec:conti} by recalling how the Liouville transform  
relates the inverse problem for the 
absorptive Schr\"{o}dinger equation to the inverse conductivity problem. 
The inverse problem for resistor networks stated in 
\S\ref{sec:rnet} is the discrete analogue of the  inverse conductivity 
problem. We can transform it to a  discrete analogue of the inverse Schr\"{o}dinger problem using  the generalized Liouville 
identity  derived in   \S\ref{sec:dlv}. The connection between
the continuum and discrete inverse problems is in \S\ref{sec:connect}.
We show in section \S\ref{sec:data} how  to relate measurements of the DtN map of the  Schr\"odinger equation
to the discrete DtN map of a unique resistor network. In  \S\ref{sec:solving} we use the generalized Liouville 
identity to obtain a discrete Schr\"odinger potential from this resistor network. The discrete potential is used in  \S\ref{sec:dtc}  to obtain a reconstruction of the 
continuum
Schr\"odinger potential. The performance of the inversion method is assessed with numerical simulations in \S\ref{sec:num}. 
We conclude with a summary in
\S\ref{sec:discussion}.

\section{Continuum inverse conductivity and Schr\"odinger problems}
\label{sec:conti}

A well-known relation between the
Schr\"odinger $L_{1,q}$ and conductivity $L_{\sigma,0}$ differential
operators is through the Liouville identity (see e.g. \cite{Sylvester:1987:GUT})
\begin{equation}
  \sigma^{-1/2} \circ L_{\sigma,0} \circ \sigma^{-1/2} = L_{1,q},
  \label{eq:lv}
\end{equation}
where $\sigma>0$, $\sigma \in C^2(\Omega)$ and
\begin{equation}
 q =  \frac{\Delta(\sigma^{1/2})}{\sigma^{1/2}}.
  \label{eq:lv:q}
\end{equation}
In the left hand side of \eqref{eq:lv} we have the composition
of three linear operators. Two of them are the operator $v \to
\sigma^{-1/2} v$ denoted, in an abuse of notation, by
$\sigma^{-1/2}$.  

In linear algebra two 
matrices $\vA$ and $\vB$
are {\em congruent} if there is an invertible 
matrix $\vS$ such that
$\vA = \vS \vB \vS^T$ (see e.g. \cite{Horn:2013:MA}).  Borrowing the
terminology, we say that two linear differential
operators $A$ and $B$ are {\em Liouville congruent} if there is a
positive function $s$ for which $A = s \circ B \circ s$. 
This allows us to restate the Liouville identity as follows: 
The operators $L_{\sigma,0}$ and $L_{1,q}$ are Liouville congruent 
when $q$ is given by \eqref{eq:lv:q}.  

Note that the 
Dirichlet to Neumann maps of the conductivity and Schr\"odinger 
problems satisfy \cite{Sylvester:1987:GUT}
\begin{equation}
 \Lambda_{1,q} = \sigma^{-1/2} \circ \Lambda_{\sigma,0} \circ \sigma^{-1/2} - \frac{1}{2}
  \vn \cdot \nabla(\sigma^{1/2}).
  \label{eq:lv:dtn}
\end{equation}
Thus, when $\sigma$ equals a constant near $\partial \Omega$,  the DtN maps are
also Liouville congruent. When $\sigma$ has a nonzero normal derivative at $\partial \Omega$,  
the DtN maps are congruent up to a diagonal (or multiplication
by a function) operator. 

We assume henceforth that $\sigma|_{\partial \Omega} = 1$.
Equations \eqref{eq:lv}--\eqref{eq:lv:dtn} show that in the continuum setting any inverse conductivity 
problem for sufficiently smooth $\sigma$ can be 
formulated as a Schr\"odinger problem. The converse is true only for potentials $q$ that give a 
positive solution $\sigma$ of  \eqref{eq:lv:q}.
This is the case for  absorptive potentials $q \geq 0$. Indeed, the strong maximum 
principle (see e.g. \cite[\S 6.4, Theorem 4]{Evans:1998:PDE}) guarantees that when $q \ge 0$, 
the solution of 
\begin{equation}
 \begin{aligned}
 -\Delta s + q \, s &= 0,~\text{in $\Omega$},\\
 s &=1,~\text{on $\partial\Omega$},
 \end{aligned}
 \label{eq:maxp}
\end{equation}
satisfies $0 < s \leq 1$. Hence, $\sigma=s^2$ satisfies 
\eqref{eq:lv:q} and the operators $L_{\sigma,0}$ and $L_{1,q}$ are Liouville 
congruent. 

We can take the notion of Liouville congruence
further than the classical relations (\ref{eq:lv})--(\ref{eq:lv:q}).
Instead of relating operators $L_{\sigma,0}$ and  $L_{1,q}$, 
we can also consider operators $L_{\sigma_1,0}$ and 
$L_{\sigma_0,q}$, for two positive conductivities $\sigma_0$ and $\sigma_1$ and a potential $q$. 
It follows by straightforward calculations that 
\begin{equation}
(\sigma_1/\sigma_0)^{-1/2} \circ L_{\sigma_1,0} \circ (\sigma_1/\sigma_0)^{-1/2}= 
L_{\sigma_0,q},
\label{eq:glv}
\end{equation}
with $q$ given by
\begin{equation}
q = \frac{ \nabla \cdot [ \sigma_0 \nabla (\sigma_1/\sigma_0)^{1/2}
]}{(\sigma_1/\sigma_0)^{1/2}}.
\label{eq:glv:q}
\end{equation}
The importance of this generalized Liouville identity becomes clear in the next 
section, where we derive a discrete analogue of (\ref{eq:glv})--(\ref{eq:glv:q}).

\section{Discrete inverse conductivity and Schr\"odinger problems}
\label{sec:rnet}
As stated in the introduction, it is useful to view our study in the context 
of parametric model reduction for inversion. We need reduced models that  keep the underlying 
structure of the differential operators $L_{\sigma,q}$ so that we can obtain reconstructions of 
 $\sigma$ and $q$ from them and, in addition, give better conditioned optimization algorithms 
than the usual output least squares. Such parametric reduced models are known for the inverse conductivity problem
in two dimensions. They are based on the circular planar resistor networks studied in \cite{Curtis:1998:CPG,Ingerman:2000:Discrete} and 
are described briefly below. We refer to \cite{Borcea:2008:EIT,Borcea:2010:PRN,borcea2010circular} and the review \cite{Borcea:2012:RNA} for details on how to use the parametric reduced models to determine a discrete Laplacian 
which is consistent with the measurements of the DtN map in the continuum setting, and to recover the 
unknown conductivity. 

In this section we describe the basic tools for  extending the inversion approach  to the  Schr\"{o}dinger problem. 
We begin in \S\ref{sect:3.1} with the formulation of the discrete analogues of the inverse conductivity and 
Schr\"odinger problems. Then we review briefly in \S\ref{sec:dinvcond} the relevant facts about the resistor networks. 
The line graph introduced in \S\ref{sec:linegraph} and the discrete Liouville identity defined in \S\ref{sec:dlv} 
allow us to use the networks for solving the inverse Schr\"odinger problem.

\subsection{The discrete conductivity and Schr\"odinger operators}
\label{sect:3.1}

{The discrete structure of a resistor network is an} undirected
graph $\cG=(V,E)$, where $V$ is a finite set of vertices (nodes) and the edge 
set $E$ is a subset of $\{ \{i,j\} ~|~ i,j \in V, i\neq j \}$. We denote 
the set of functions from a finite set $X$ to $\real$ by $\real^X$, and 
write $\vf(x)$ for $\vf  \in \real^X$ and $x \in X$. All functions and 
operators related to finite sets are written henceforth in bold to 
distinguish them from their continuum counterparts. 

The discrete gradient on a network is the linear operator 
$\vD : \real^V \to \real^E$ that maps $\vf \in \real^V$ to
\begin{equation}
 (\vD\vf)(\{i,j\}) = \vf(i) - \vf(j),~\text{for}~\{i,j\} \in E.
\end{equation}
A sign needs to be specified for each edge, but as long as this sign
convention is fixed, it does not change the subsequent definitions.

A resistor network is defined by its  graph $\cG=(V,E)$ 
and a positive discrete conductivity function 
$\vgamma \in (0,\infty)^E$. We may also define a discrete Schr\"odinger
potential $\vq \in \real^V$ on the vertices of the graph, and introduce 
the  discrete Schr\"odinger operator $\vL_{\vgamma,\vq} : \real^V \to \real^V$,
which maps potentials $\vu \in \real^V$ to
\begin{equation}
\vL_{\vgamma,\vq} \vu = \vD^*[\vgamma \odot (\vD \vu)] + \vq \odot \vu \in \real^V.
\label{eqn:vLgq}
\end{equation}
Here $\vD^*:\real^E \to \real^V$ is the adjoint of the discrete gradient $\vD$ 
and the product $\vf \odot \vg$ for functions $\vf,\vg \in \real^X$ 
(where $X$ is either $V$ or $E$) is understood componentwise i.e., for $x \in X$, 
$(\vf\odot \vg)(x) = \vf(x) \vg(x)$. Explicitly, for each $i \in V$, \eqref{eqn:vLgq} gives 
\begin{equation}
(\vL_{\vgamma,\vq} \vu)(i) = \sum_{j ~\text{s.t.} \{i,j\} \in E}
\vgamma(\{i,j\}) ( \vu(i) - \vu(j) ) + \vq(i)\vu(i).
\end{equation} 

The special case $\vL_{\vgamma,\vzero}$ is the {\em weighted graph
Laplacian} with weight $\vgamma$ (see e.g.
\cite{Chung:1997:SGT,Curtis:1991:DNM}). This is  a  resistor network. The edge $e$ of the graph is a resistor with
conductance $\vgamma(e)$ and the nodes represent electrical connections.
The absorptive case $\vL_{\vgamma,\vq}$ with $\vq > 0$ is also a resistor network,
with each node $i$ connected to the ground (zero potential) by
a resistor with conductance $\vq(i)$.

To define the DtN map of a resistor network, we 
collect the nodes $V$ in two disjoint sets: the  {\em boundary nodes} in the set $B \subset V$, and 
the {\em interior} nodes $I = V \backslash B$. 
The Dirichlet boundary value problem for the network is to find the potential $\vu \in \real^V$ such that
\begin{equation}
 \label{eq:dir}
 \begin{aligned}
 (\vL_{\vgamma,\vq} \vu )_I = (\vL_{\vgamma,\vq})_{II} \vu_I +
 (\vL_{\vgamma,\vq})_{IB} \vu_B&= \vzero,~\text{and}\\
 \vu_B &= \vf,
 \end{aligned}
\end{equation}
with given $\vf \in \real^B$. Here $\vu_I$ is the restriction of $\vu$
to $I$, and the linear operator $(\vL_{\vgamma,\vq})_{BI}: \real^I \to \real^B$ is
defined by $(\vL_{\vgamma,\vq})_{BI} \vv = (\vL_{\vgamma,\vq} \vu)_B$
where $\vu_I = \vv$ and $\vu_B = \vzero$. The other operators are defined similarly.
\begin{lemma}
When the subgraph of $\cG$ induced by $I$ is {\em
connected}, the Dirichlet problem for $\vq \geq \vzero$ has a unique
solution for any $\vf \in \real^B$.  
\end{lemma}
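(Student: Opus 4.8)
The plan is to show that the linear system in \eqref{eq:dir} for $\vu_I$, namely $(\vL_{\vgamma,\vq})_{II}\vu_I = -(\vL_{\vgamma,\vq})_{IB}\vf$, has a unique solution by proving that the matrix $(\vL_{\vgamma,\vq})_{II}$ is invertible. Since this is a square matrix indexed by $I$, it suffices to show it is positive definite, which I would do by exhibiting it as the matrix of a positive definite quadratic form on $\real^I$.

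First I would write down the energy (Dirichlet) form associated to $\vL_{\vgamma,\vq}$. For $\vu \in \real^V$ one has
\begin{equation}
\langle \vu, \vL_{\vgamma,\vq}\vu\rangle = \sum_{\{i,j\}\in E} \vgamma(\{i,j\})\,\big(\vu(i)-\vu(j)\big)^2 + \sum_{i\in V}\vq(i)\,\vu(i)^2,
\end{equation}
using $\vD^*$ being the adjoint of $\vD$ and the componentwise product. Restricting to vectors $\vu$ supported on $I$ (i.e. $\vu_B=\vzero$) gives the quadratic form of $(\vL_{\vgamma,\vq})_{II}$. Because $\vgamma > \vzero$ and $\vq \geq \vzero$, every term in this sum is nonnegative, so the form is positive semidefinite; the remaining task is to rule out a nonzero null vector.

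Next I would argue that the form vanishes only at $\vu_I = \vzero$. Suppose $\vu$ is supported on $I$ and $\langle \vu, \vL_{\vgamma,\vq}\vu\rangle = 0$. Then every edge term vanishes, so $\vu(i)=\vu(j)$ whenever $\{i,j\}\in E$; in particular $\vu$ is constant on each connected component of the graph. Here the hypothesis that the subgraph induced by $I$ is connected enters: it forces $\vu$ to be constant on all of $I$. If $B$ is nonempty, pick $i\in I$ and $j\in B$ with $\{i,j\}\in E$ (such an edge exists because the full graph is connected and $I\neq V$); then $\vu(i) = \vu(j) = 0$, so $\vu_I=\vzero$. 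If instead $B=\emptyset$, then $\vu$ is a nonzero constant $c$ on the connected graph $\cG$, and the potential term gives $\sum_i \vq(i)c^2 = 0$, which would force $\vq\equiv\vzero$ — so in the degenerate case one additionally needs either $\vq$ not identically zero or, more naturally, $B\neq\emptyset$, which is the setting of interest for a resistor network with boundary. I would state this mild caveat or simply assume $B\neq\emptyset$ as is implicit throughout. The main (and essentially only) obstacle is this connectedness argument: one must be careful that "connected subgraph induced by $I$" together with connectedness of $\cG$ is exactly what propagates the value $0$ from the boundary into every interior node, and that the sign conditions $\vgamma>\vzero$, $\vq\geq\vzero$ are used precisely to get a sum of nonnegative terms with no cancellation.

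Finally, having shown $(\vL_{\vgamma,\vq})_{II}$ is symmetric positive definite, hence invertible, I conclude that $\vu_I = -(\vL_{\vgamma,\vq})_{II}^{-1}(\vL_{\vgamma,\vq})_{IB}\vf$ is the unique solution of the interior equations, and together with $\vu_B=\vf$ this determines the unique $\vu\in\real^V$ solving \eqref{eq:dir}.
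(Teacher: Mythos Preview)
Your proposal is correct and follows essentially the same approach as the paper: both show that $(\vL_{\vgamma,\vq})_{II}$ is positive definite by splitting the associated quadratic form into the contribution from edges inside $I$ (which vanishes only on constants, by connectedness of the induced subgraph), the contribution from $I$--$B$ edges (which forces that constant to be zero), and the nonnegative potential term. The paper phrases this via the matrix decomposition $(\vL_{\vgamma,\vzero})_{II}=\vL_{\vgamma_I,\vzero}+\vE$ and the splitting $\vv=\vv_\perp+\alpha\vone$, then adds $\diag(\vq_I)$ separately, whereas you argue directly on the quadratic form and handle $\vq\ge\vzero$ in one pass; the underlying idea is identical, and your caveat about needing at least one $I$--$B$ edge (implicit in the paper via $\vE\neq 0$) is well spotted.
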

\begin{proof}
The case $\vq = \vzero$ is proven in
\cite{Curtis:2000:IPE,Colin:1998:SG} using a discrete analogue to the
maximum principle. Here we proceed by first showing that
$(\vL_{\vgamma,\vzero})_{II}$ is positive definite. If the subgraph of
$\cG$ induced by $I$ is connected, we have that the weighted graph
Laplacian on this subgraph, which is denoted by
$\vL_{\vgamma_I,\vzero}$, is a symmetric positive semidefinite matrix
with a one dimensional nullspace spanned by the constant vector $\vone
\in \real^I$. The sub-matrix $(\vL_{\vgamma,\vzero})_{II}$ can be
written
\[
(\vL_{\vgamma,\vzero})_{II} = \vL_{\vgamma_I,\vzero} + \vE,
\] 
where $\vE$ is a diagonal matrix, whose only non-zero entries are
positive and correspond to the interior nodes that have an edge in
common with a boundary node. If we decompose $\vv \in \real^I$ as $\vv =
\vv_\perp + \alpha \vone$, with $\vone \in \real^I$ and $\vv_\perp^T \vone = 0$, it is clear
that $\vv^T (\vL_{\vgamma,\vzero})_{II} \vv = \vv_\perp^T
\vL_{\vgamma_I,\vzero} \vv_\perp + \vv^T \vE \vv$, and therefore
$(\vL_{\vgamma,\vzero})_{II}$ must be positive definite. When 
$\vq\geq\vzero$,  notice that
$
(\vL_{\vgamma,\vq})_{II} = (\vL_{\vgamma,\vzero})_{II} + \diag(\vq_I)
$
is the sum of a positive definite and a positive semidefinite
matrix, and must then be positive definite. This proves the solvability
result for $\vq \geq 0$.
\end{proof}

The Dirichlet to
Neumann (DtN) map $\vLambda_{\vgamma,\vq} : \real^B \to \real^B$ associates to the boundary potentials $\vf \in \real^B$ 
the boundary currents 
\begin{equation}
\label{eq:netbdry}
(\vL_{\vgamma,\vq} \vu)_B = (\vL_{\vgamma,\vq})_{BB} \vu_B +
(\vL_{\vgamma,\vq})_{BI} \vu_I,
\end{equation}
where $\vu$
solves (\ref{eq:dir}).  The linear map $\vLambda_{\vgamma,\vq}$ is symmetric, and in 
the case $\vq = \vzero$, it has a  one dimensional null space spanned 
by the vector of all ones ${\bf 1} \in \mathbb{R}^B$. 
To write it more explicitly, let us simplify notation as $
\vL_{BB} := (\vL_{\vgamma,\vq})_{BB}, $ and similar for the $BI$, $IB$ and $II$ blocks 
of $\vL_{\vgamma,\vq}$. 
By solving for $\vu_I$
in \eqref{eq:dir} and substituting in \eqref{eq:netbdry}, we can write  $\vLambda_{\vgamma,\vq}$ as a Schur complement of $\vL_{\vgamma,\vq}$,
\begin{equation}
 \vLambda_{\vgamma,\vq} = \vL_{BB} - \vL_{BI} \vL_{II}^{-1} \vL_{IB}.
 \label{eq:dtn}
\end{equation}

\subsection{The inverse problem for resistor networks}
\label{sec:dinvcond}
The discrete analogue of the inverse conductivity problem is: Find the  conductivity $\vgamma\in (0,\infty)^E$ from the 
DtN map $\vLambda_{\vgamma,\vzero}$, given the underlying graph $\cG=(V,E)$. In what follows we  refer to the 
solution  as $\vgamma(\vLambda_{\vgamma,\vzero})$.

The discrete inverse problem has been solved for circular planar 
graphs \cite{Curtis:1998:CPG,Curtis:1994:FCC,Colin:1994:REP} i.e.,  when $\cG$ 
can be embedded in the plane with no edge crossings 
so that the boundary nodes are on a circle and the interior nodes inside the circle.
The recoverability result in \cite{Curtis:1998:CPG,Curtis:1994:FCC,Colin:1994:REP} 
can be summarized as follows.

\begin{theorem}
\label{thm:recoverability}
The DtN map for a {\em critical} circular planar resistor
network with positive conductivity determines uniquely the conductivity.
\end{theorem}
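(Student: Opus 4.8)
The plan is to establish recoverability of the conductivity on a critical circular planar network by combining three ingredients: (i) a normal form for critical circular planar graphs under the "$Y-\Delta$" (star-triangle) transformations that preserves the DtN map; (ii) an explicit inversion on the normal-form graph; and (iii) a propagation-of-invertibility argument showing that uniqueness on the normal form pulls back to uniqueness on $\cG$. I would begin by recalling the combinatorial structure: a circular planar graph $\cG$ is called \emph{critical} if it is \emph{well-connected} (every set of boundary nodes in "circular position" is connected to a matching set through disjoint paths in the interior) and \emph{minimal} in the sense that no edge can be removed without destroying well-connectedness. The key structural fact, due to Curtis--Ingerman--Morrow and de Verdi\`ere--Gitler--Vertigan, is that any two critical circular planar graphs with the same medial graph / the same "connection pattern" are related by a finite sequence of $Y-\Delta$ transformations, and each such transformation corresponds to an invertible, conductivity-preserving change of network.

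The first main step is to show that the $Y-\Delta$ transformations act on conductivities in a way that is a rational homeomorphism: given the conductances on one side, the star-triangle formulas determine the conductances on the other side and vice versa, and all intermediate quantities stay strictly positive. This reduces the recoverability question to a single reference graph in each equivalence class --- the "standard graph" whose edges can be traversed layer by layer from the boundary inward. The second step is the explicit recovery on this standard graph: using the circular planar structure, one identifies, for each boundary edge, a family of special boundary functions $\vf$ (supported so that the medial path "peels off" one edge) for which the bilinear form $\vf^T \vLambda_{\vgamma,\vzero} \vg$ isolates the contribution of a single conductance, so that conductance is read off directly; then one deletes or contracts that edge, updates the DtN map by a Schur-complement/harmonic-extension formula, and proceeds inductively on the now-smaller critical graph. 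That the updated object is again the DtN map of a smaller critical circular planar network is the bookkeeping heart of the argument, and it is where the "critical" hypothesis is essential --- it guarantees both that the special boundary data exist (well-connectedness gives the needed disjoint connections, hence nonvanishing determinants) and that removing the edge keeps the graph critical (minimality).

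The hard part, I expect, is precisely this inductive update: proving that the "peeling" step is well-defined --- i.e., that at each stage there is a boundary edge whose conductance is uniquely determined by the current DtN map, and that eliminating it yields a DtN map of a strictly smaller critical network --- requires the full force of the graph theory in \cite{Curtis:1998:CPG,Curtis:1994:FCC,Colin:1994:REP} relating connections through the network to nonvanishing minors of $\vLambda_{\vgamma,\vzero}$ (the "connection = nonzero determinant" theorem). Once that machinery is in place, uniqueness follows by induction on $|E|$, with the base case a graph with a single interior structure where the conductance is manifestly determined. I would therefore present the proof as a reduction to this combinatorial lemma, cite the medial-graph and $Y-\Delta$ results, and give the peeling/Schur-complement recursion explicitly, since everything else is either standard linear algebra (Schur complements of $\vL_{\vgamma,\vzero}$, as in \eqref{eq:dtn}) or quoted graph theory.
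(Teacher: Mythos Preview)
The paper does not prove this theorem. It is stated purely as a summary of known results, with citations to \cite{Curtis:1998:CPG,Curtis:1994:FCC,Colin:1994:REP}; there is no proof environment following the statement, and the surrounding text makes clear that the authors are quoting the recoverability theory rather than developing it. So there is nothing in the paper for your proposal to be compared against.

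That said, your sketch is a faithful outline of the proof as it appears in the cited references: the reduction to a standard graph via $Y$--$\Delta$ moves (Colin de Verdi\`ere--Gitler--Vertigan), the ``connection implies nonzero minor'' theorem, and the layer-peeling recursion (Curtis--Ingerman--Morrow, Curtis--Morrow) are exactly the ingredients those papers use, and you have correctly identified the inductive Schur-complement update as the technical crux. For the purposes of \emph{this} paper, however, the appropriate ``proof'' is simply a citation, as the authors do; reproducing the full argument would be out of scope.
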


\noindent A circular planar graph is said to be {\em critical} if the two
following conditions hold: 
\begin{enumerate}
\item[(i)] any two sets of boundary nodes with
the same number $p$ of elements and lying on disjoint segments of the boundary
circle can be linked with $p$ disjoint paths. 
\item[(ii)] the deletion of any edge 
in the graph breaks condition (i). 
\end{enumerate}
For example, among the family of graphs
$C(m,n)$ illustrated in Figure~\ref{fig:cmn},  with $n$ odd, the graphs are critical 
when $m = (n-1)/2$ i.e., when the number $|E|$ of edges is equal to the
number $n(n-1)/2$ of entries in the strictly upper triangular part\footnote{Since $\vLambda_{\vgamma,\vzero}$ is  
symmetric  with rows summing to zero, its independent entries lie in its strictly upper triangular part.}  of $\vLambda_{\vgamma,\vzero}$.

\begin{figure}
\begin{center}
\begin{tabular}{cc}
 \includegraphics[width=0.3\textwidth]{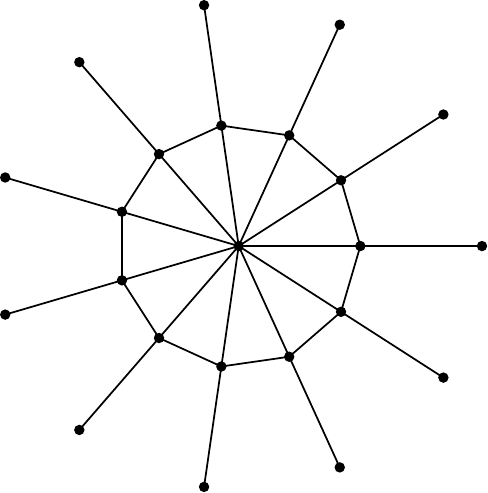} & 
 \includegraphics[width=0.3\textwidth]{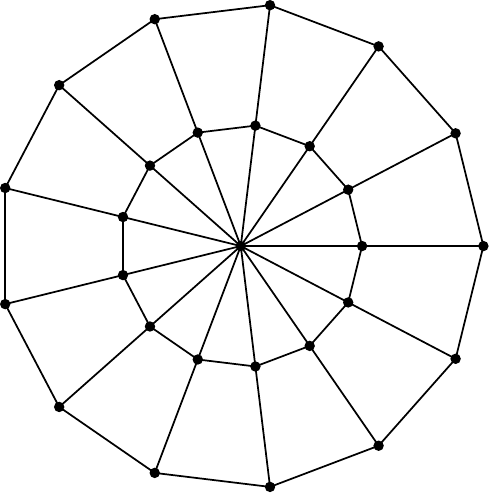}\\
 $C(3,11)$ & $C(4,13)$
\end{tabular}
\end{center}
\caption{Graphs of class $C(m,n)$, where $n$ the number of boundary nodes and $m$ is the number of radial and angular layers.
The boundary nodes are shown as circles and the interior nodes as filled (black) circles.}
\label{fig:cmn}
\end{figure}

\begin{remark} There are other 
families of critical circular planar graphs that allow the number of boundary nodes to be 
even, such as the pyramidal networks in \cite{Borcea:2010:PRN,Curtis:1998:CPG}. In this paper 
we use graphs $C\big(n(n-1)/2,n\big)$ with $n $ odd. 
\end{remark}

There exist direct (optimization-free) reconstruction methods  for networks with graphs  $C((n-1)/2,n)$.
They solve the discrete inverse conductivity problem in a 
finite number of algebraic operations by peeling off sequentially the
layers of the network \cite{Curtis:1994:FCC}.  
Layer peeling and any other methods of reconstructing $\vgamma(\vLambda_{\vgamma,\vzero})$  become 
unstable as the number of edges in the network grows. This is similar to the 
instability of the continuum inverse conductivity problem. Thus, in practice it is impossible 
to approximate the solution of the continuum problem by the discrete conductivity 
of larger and larger networks. However, we can relate the continuum and discrete problems 
by using networks of modest size, as explained in 
\S\ref{sec:connect}.

In \S\ref{sec:conti} we discussed a well-known connection 
between the inverse conductivity and Schr\"odinger problems in the
continuum via the Liouville transform and its generalization 
(\ref{eq:glv})--(\ref{eq:glv:q}). To use the existing results 
on the discrete inverse conductivity problem, we seek to relate the discrete conductivity
$\vgamma$ to a discrete Schr\"odinger potential $\vq$. 
This is not straightforward because the discrete conductivity 
$\vgamma \in \real^E$ is defined on the edges of the graph, while 
$\vq \in \real^V$ is defined at the nodes. To resolve this difficulty,
and to derive the discrete analogue of the generalized Liouville 
transform, we introduce below the line graph and a discrete Schr\"odinger
operator associated with it.

\subsection{The line graph}
\label{sec:linegraph}

To a resistor network with graph $\cG = (V,E)$ we associate a
{\em line graph} $\widetilde{\cG} = (\widetilde{V},\widetilde{E})$.
All quantities and operators associated with the line graph
appear henceforth with tilde. The vertices (nodes) $\widetilde{V}$ and edges
$\widetilde{E}$ of the line graph are defined in terms of $V$ and $E$
of the original graph so that:
\begin{itemize}
\item[{(i)}] there is one vertex of $\widetilde{\cG}$ per edge of $\cG$, 
and so $\widetilde{V}$ is isomorphic to $E$.
\item[{(ii)}] there is an edge between two vertices of $\widetilde{\cG}$ 
if and only if the corresponding edges of $\cG$ share a node.
\end{itemize} 
An example of a graph and its line graph is given in Figure~\ref{fig:linegraph}.

If $\vgamma \in (0,\infty)^E$ is a conductivity on the graph $\cG$, we
define the conductivity $\widetilde{\vgamma} \in (0,\infty)^{\widetilde{E}}$ 
on the associated line graph by geometric averages of $\vgamma$. 
To be more precise, if $e,e' \in E$ are distinct edges in  $\cG$ that share a vertex, then $\{e,e'\} \in \widetilde{E}$ is an 
edge of the line graph $\widetilde{\cG}$ and
\begin{equation}
 \widetilde{\vgamma}(\{e,e'\}) =
 \sqrt{\vgamma(e)\vgamma(e')}.
\end{equation}
The line graph together with the conductivity $\widetilde{\vgamma}$ is 
itself a resistor network. As in section~\ref{sec:rnet} we can define a
discrete gradient $\widetilde{\vD} : \real^{\widetilde{V}} \to
\real^{\widetilde{E}}$, and for some $\widetilde{\vq} \in
\real^{\widetilde{V}}$, we can define a discrete Schr\"odinger operator
$\widetilde{\vL}_{\widetilde{\vgamma},\widetilde{\vq}} :
\real^{\widetilde{V}} \to \real^{\widetilde{V}}$ that maps
$\widetilde{\vu} \in \real^{\widetilde{V}}$ to
\begin{equation}
\label{eq:deflineLgq}
 \widetilde{\vL}_{\widetilde{\vgamma},\widetilde{\vq}} \widetilde{\vu} = \widetilde{\vD}^*[
 \widetilde{\vgamma} \odot (\widetilde{\vD}
 \widetilde{\vu})]
 + \widetilde{\vq} \odot \widetilde{\vu}.
\end{equation}
Since $\widetilde{V}$ is isomorphic to $E$ and 
$\real^{\widetilde{V}}$ is isomorphic to $\real^E$, we can think of
$\widetilde{\vL}_{\widetilde{\vgamma},\widetilde{\vq}}$ as a discrete
Schr\"odinger operator acting on the edges $E$ of $\cG$, with a
Schr\"odinger potential $\widetilde{\vq}$ defined on $E$ as well (i.e., a
function in $[0,\infty)^E$).

\begin{figure}
\begin{center}
 \includegraphics[width=0.4\textwidth]{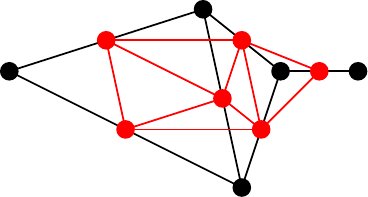}
\end{center}
\caption{A graph $\cG$ in black and its line graph $\widetilde{\cG}$ in red.}
\label{fig:linegraph}
\end{figure}

\subsection{The discrete generalized Liouville identity}
\label{sec:dlv}
With the line graph defined above, we derive here a
discrete analogue to the generalized Liouville identity 
\eqref{eq:glv}--\eqref{eq:glv:q}.
The need for this identity can be explained as follows. Unlike the 
continuum case, where $\sigma \equiv 1$ corresponds to 
$L_{1,0} = \Delta$, the case of constant discrete conductivity 
$\widetilde{\vgamma} = \widetilde{\vone}$ has no special significance in 
the discrete setting. However, the reduced model (resistor network) for the reference conductivity $\sigma_o \equiv 1$ 
(i.e., $q \equiv 0$)
is a good approximation of the Laplacian on the graph $\cG$, as shown in \cite{Borcea:2008:EIT, Borcea:2010:PRN, Borcea:2012:RNA}.
The geometric averages of its edge conductivities $\vgamma_0$ define the line graph Laplacian. We explain in the next section how
to obtain the reduced model (resistor network) for the unknown parameter $\sigma$, related to the unknown $q$. 
The discrete Liouville identity stated in the next theorem allows us to estimate $q$ from this reduced model and the line graph Laplacian.  

\begin{theorem}[Discrete Generalized Liouville Identity]\label{thm:dlv}
Let $\vgamma_0$ and $\vgamma_1$ be two conductivities in $(0,\infty)^E$. Then
their associated weighted graph Laplacians on the line graph satisfy
\begin{equation}
 \widetilde{\vL}_{\widetilde{\vgamma}_1,\vzero}  = \diag(\vgamma_1/\vgamma_0)^{1/2}
 \widetilde{\vL}_{\widetilde{\vgamma}_0,\widetilde{\vq}} \diag(\vgamma_1/\vgamma_0)^{1/2},
 \label{eq:dlv}
\end{equation}
where division and power of vectors is understood componentwise and
\begin{equation} 
 \widetilde{\vq} = - (\widetilde{\vL}_{\widetilde{\vgamma}_0,\vzero}
 [(\vgamma_1/\vgamma_0)^{1/2}]) \odot [(\vgamma_1/\vgamma_0)^{-1/2}].
 \label{eq:dlv:q}
\end{equation}
We say that $\widetilde{\vL}_{\widetilde{\vgamma}_1,\vzero}$ and
$\widetilde{\vL}_{\widetilde{\vgamma}_0,\widetilde{\vq}}$ are {\em Liouville
congruent}, when $\widetilde{\vq}$ is given by \eqref{eq:dlv:q}.
\end{theorem}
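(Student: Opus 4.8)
The plan is to verify the identity \eqref{eq:dlv} directly by unpacking the definition \eqref{eq:deflineLgq} of the line-graph Schr\"odinger operator and matching the two sides componentwise. First I would write $\vs := (\vgamma_1/\vgamma_0)^{1/2} \in (0,\infty)^{\widetilde V}$, using the isomorphism $\widetilde V \cong E$, so that the claimed identity reads $\widetilde{\vL}_{\widetilde{\vgamma}_1,\vzero} = \diag(\vs)\, \widetilde{\vL}_{\widetilde{\vgamma}_0,\widetilde{\vq}}\, \diag(\vs)$ with $\widetilde{\vq} = -(\widetilde{\vL}_{\widetilde{\vgamma}_0,\vzero}\vs)\odot \vs^{-1}$. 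Expanding the conjugated operator applied to an arbitrary $\widetilde{\vu} \in \real^{\widetilde V}$, one gets
\begin{equation}
\diag(\vs)\,\widetilde{\vL}_{\widetilde{\vgamma}_0,\widetilde{\vq}}\,\diag(\vs)\,\widetilde{\vu}
= \vs \odot \big(\widetilde{\vD}^*[\widetilde{\vgamma}_0 \odot \widetilde{\vD}(\vs\odot\widetilde{\vu})]\big) + (\widetilde{\vq}\odot\vs^2)\odot\widetilde{\vu},
\nonumber
\end{equation}
and by the choice of $\widetilde{\vq}$ the last term equals $-\vs\odot(\widetilde{\vD}^*[\widetilde{\vgamma}_0\odot\widetilde{\vD}\vs])\odot\widetilde{\vu}$. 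So the identity to prove reduces to the purely Laplacian statement
\begin{equation}
\widetilde{\vD}^*[\widetilde{\vgamma}_1\odot\widetilde{\vD}\widetilde{\vu}]
= \vs\odot\widetilde{\vD}^*[\widetilde{\vgamma}_0\odot\widetilde{\vD}(\vs\odot\widetilde{\vu})] - \vs\odot\widetilde{\vu}\odot\widetilde{\vD}^*[\widetilde{\vgamma}_0\odot\widetilde{\vD}\vs],
\nonumber
\end{equation}
which is exactly the discrete product (Leibniz) rule one expects: the right-hand side should collapse because the two "boundary-like" cross terms cancel, leaving a weight $\widetilde{\vgamma}_0(\{e,e'\})\, \vs(e)\vs(e') = \sqrt{\vgamma_0(e)\vgamma_0(e')}\cdot\sqrt{\vgamma_1(e)/\vgamma_0(e)}\sqrt{\vgamma_1(e')/\vgamma_0(e')} = \sqrt{\vgamma_1(e)\vgamma_1(e')} = \widetilde{\vgamma}_1(\{e,e'\})$ on each edge $\{e,e'\}$ of the line graph.

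I would carry this out at the level of a single interior node $e \in \widetilde V$. Writing out $(\widetilde{\vD}^*[\widetilde{\vw}\odot\widetilde{\vD}\widetilde{\vu}])(e) = \sum_{e' : \{e,e'\}\in\widetilde E} \widetilde{\vw}(\{e,e'\})(\widetilde{\vu}(e)-\widetilde{\vu}(e'))$, the right-hand side of the reduced identity at node $e$ becomes
\begin{equation}
\sum_{e'} \widetilde{\vgamma}_0(\{e,e'\})\Big[\vs(e)\big(\vs(e)\widetilde{\vu}(e)-\vs(e')\widetilde{\vu}(e')\big) - \vs(e)\widetilde{\vu}(e)\big(\vs(e)-\vs(e')\big)\Big].
\nonumber
\end{equation}
The bracket simplifies to $\vs(e)\vs(e')\widetilde{\vu}(e) - \vs(e)\vs(e')\widetilde{\vu}(e') = \vs(e)\vs(e')(\widetilde{\vu}(e)-\widetilde{\vu}(e'))$, so the sum is $\sum_{e'} \widetilde{\vgamma}_0(\{e,e'\})\vs(e)\vs(e')(\widetilde{\vu}(e)-\widetilde{\vu}(e'))$, which by the weight computation above equals $\sum_{e'}\widetilde{\vgamma}_1(\{e,e'\})(\widetilde{\vu}(e)-\widetilde{\vu}(e')) = (\widetilde{\vD}^*[\widetilde{\vgamma}_1\odot\widetilde{\vD}\widetilde{\vu}])(e)$. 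Since $e$ and $\widetilde{\vu}$ were arbitrary, \eqref{eq:dlv} follows, and symmetry of both sides (hence the "Liouville congruent" terminology) is immediate since $\diag(\vs)$ is symmetric and each weighted graph Laplacian is symmetric.

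The only delicate point — and the place I expect the argument to need a careful word rather than a genuine obstacle — is bookkeeping with the sign convention on $\widetilde{\vD}$ and the identification of $\real^{\widetilde V}$ with $\real^E$: one must check that the cancellation of the cross terms is insensitive to the (arbitrary but fixed) orientation chosen for each edge of $\widetilde{\cG}$, which it is because $\widetilde{\vD}^*\widetilde{\vD}$ depends only on the unoriented edge set, and because everything is expressed through the differences $\widetilde{\vu}(e)-\widetilde{\vu}(e')$ and the symmetric weights $\widetilde{\vgamma}(\{e,e'\})$. I would also note in passing that \eqref{eq:dlv:q} is the exact discrete counterpart of \eqref{eq:glv:q}: $\widetilde{\vq} = -(\widetilde{\vL}_{\widetilde{\vgamma}_0,\vzero}\vs)\odot\vs^{-1}$ plays the role of $-\nabla\cdot[\sigma_0\nabla(\sigma_1/\sigma_0)^{1/2}]/(\sigma_1/\sigma_0)^{1/2}$, with the sign matching the convention $\widetilde{\vL}_{\widetilde{\vgamma}_0,\vzero} \leftrightarrow -\nabla\cdot[\sigma_0\nabla\,\cdot\,]$ built into \eqref{eqn:vLgq}.
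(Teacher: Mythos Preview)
Your proof is correct. It differs from the paper's in organization rather than in substance, but the difference is worth a sentence or two.

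The paper works with matrix entries. It first observes that for \emph{any} conductivity $\vgamma$, the off-diagonal part of $\diag(\vgamma^{-1/2})\,\widetilde{\vL}_{\widetilde{\vgamma},\vzero}\,\diag(\vgamma^{-1/2})$ is just (minus) the adjacency matrix of the line graph, independent of $\vgamma$. Writing this for $\vgamma_0$ and $\vgamma_1$ and equating gives the off-diagonal entries of \eqref{eq:dlv} at once. The diagonal entries are then \emph{derived}: since the off-diagonals already agree, matching diagonals is the same as matching row sums, and $\widetilde{\vL}_{\widetilde{\vgamma}_1,\vzero}\vone=\vzero$ forces $\diag(\vs)\widetilde{\vL}_{\widetilde{\vgamma}_0,\widetilde{\vq}}\vs=\vzero$, which solved for $\widetilde{\vq}$ gives \eqref{eq:dlv:q}.

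You instead take \eqref{eq:dlv:q} as given and verify \eqref{eq:dlv} by acting on an arbitrary vector and checking a discrete Leibniz identity componentwise. The crucial algebraic fact is the same in both arguments, namely $\widetilde{\vgamma}_0(\{e,e'\})\,\vs(e)\vs(e')=\widetilde{\vgamma}_1(\{e,e'\})$, which is precisely the paper's off-diagonal normalization written multiplicatively. What the paper's route buys is a cleaner separation of concerns (off-diagonal structure vs.\ diagonal potential) and an explanation of \emph{why} $\widetilde{\vq}$ must take the form \eqref{eq:dlv:q}; what your route buys is a more visible parallel with the continuum computation \eqref{eq:glv}--\eqref{eq:glv:q}. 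One small remark: your phrase ``a single interior node $e$'' is unnecessary---there is no boundary/interior distinction in this identity, and your computation goes through verbatim at every $e\in\widetilde V$.
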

\begin{proof}
We obtain from  \eqref{eq:deflineLgq} and the 
 definition of the discrete gradient
$\widetilde{\vD}$ that for any $\vgamma \in (0,\infty)^E$ and $e,f \in E$, $e \ne f$, 
\[
 \widetilde{\vL}_{\widetilde{\vgamma},\vzero}(e,f) = 
 \begin{cases} 
  \widetilde{\vgamma}(\{e,f\}) = -\sqrt{\vgamma(e)\vgamma(f)}, &\text{if
  $\{e,f\} \in \widetilde{E}$,}\\
  0, &\text{otherwise.}
 \end{cases}
\]
Hence for any $\vgamma \in (0,\infty)^E$, and distinct $e,f \in
E$, we have 
\begin{equation}
 \label{eq:one}
 [\diag(\vgamma^{-1/2}) \widetilde{\vL}_{\widetilde{\vgamma},\vzero}
 \diag(\vgamma^{-1/2})](e,f) = 
 \begin{cases} 
  -1, &\text{if
  $\{e,f\} \in \widetilde{E}$,}\\
  0, &\text{otherwise.}
 \end{cases}
\end{equation}
Then the off-diagonal entries of \eqref{eq:dlv} follow from writing
\eqref{eq:one} for $\gamma$ equal to $\vgamma_0$ and $\vgamma_1$,  and equating.

It remains to calculate $\widetilde \vq$, so that the diagonal entries of  \eqref{eq:dlv} are equal.
Because the off-diagonal entries are equal, this is the same as equating 
the sum of the rows
\[
\widetilde{\vL}_{\widetilde{\vgamma}_1,\vzero}\vone = \widetilde{\vL}_{\widetilde{\vgamma}_0,\widetilde \vq} \vone, \qquad 
\vone \in \real^E.
\]
Since
$\widetilde{\vD} \vone =\vzero$, we must also have
$\widetilde{\vL}_{\widetilde{\vgamma},\vzero} \vone = \vzero$, so 
\[
 \vzero =  \widetilde{\vL}_{\widetilde{\vgamma}_0,\widetilde \vq} \vone = [(\vgamma_1/\vgamma_0)^{1/2}] \odot \left(
 \widetilde{\vL}_{\widetilde{\vgamma}_0,\vzero}
 [(\vgamma_1/\vgamma_0)^{1/2}] + \widetilde{\vq} \odot
 [(\vgamma_1/\vgamma_0)^{1/2}] \right).
\]
Solving for $\widetilde{\vq}$ in this equation gives \eqref{eq:dlv:q}.
\end{proof}

\begin{remark}
Theorem \ref{thm:dlv} shows that two weighted graph Laplacians on the line graph, 
$\widetilde{\vL}_{\widetilde{\vgamma}_1,\vzero}$ and $\widetilde{\vL}_{\widetilde{\vgamma}_0,\vzero}$, 
are congruent up to a diagonal term which involves the potential $\widetilde \vq$. Moreover, 
the matrix that carries the congruence relation is diagonal, given by $\diag(\vgamma_1/\vgamma_0)^{1/2}$. 
This is the discrete analogue of the congruence relation \eqref{eq:glv}, and the definition of $\widetilde \vq$ is the 
analogue of \eqref{eq:glv:q}.
\end{remark} 

\section{Connection between the continuum and discrete inverse problems}
\label{sec:connect}
To solve the continuum inverse problem, we 
obtain in \S\ref{sec:data} the networks as parametric reduced models for lumped measurements of the DtN map $\Lambda_{1,q}$. 
These measurements define the DtN map $\Lambda_{\vgamma,0}$ of the network with discrete conductivity $\vgamma$, 
related to the   unknown potential $q$ via  the 
 discrete generalized Liouville 
identity, as described in 
\S\ref{sec:solving}.

\subsection{Resistor networks as parametric reduced models}
\label{sec:data}
The exponential instability of the inverse Schr\"odinger and conductivity problems 
limits the resolution of the reconstructions. In our context this means that the 
size of the reduced models (the networks) cannot be too large, specially for noisy data. 
For the networks with graphs $C\big(n(n-1)/2,n\big)$ considered here,  the number $n$ 
of boundary nodes can be chosen based on the noise level, as explained in Appendix \ref{app:lump}. 
We show here how to define from the measurements of $\Lambda_{1,q}$ a discrete 
DtN map of the network with graph $C\big(n(n-1)/2,n\big)$, our reduced model for the unknown potential $q$ and therefore conductivity. 

In principle, we could just take point measurements of $\Lambda_{1,q}$ at the $n$ boundary nodes. 
However, studies such as \cite{gisser1990electric}  tell us that smooth boundary currents 
(i.e., with Fourier transform supported at small frequency) are better for sensing inhomogeneities 
inside the domain. Thus, we smooth the measurements of $\Lambda_{1,q}$ by lumping them at the $n$ boundary 
points of the network. The lumping is achieved with  $n$ compactly 
supported smooth functions $\phi_1,\ldots,\phi_n$, 
whose disjoint supports are numbered consecutively in $\partial\Omega$,  
in counter-clockwise order. We normalize them by 
$\int_{\partial\Omega} \phi_j = 1$. 

From 
$\Lambda_{1,q}$ we obtain the $n \times n$ 
data matrix $\langle \phi_i , \Lambda_{1,q} \phi_j \rangle$ for $i,j = 1,\ldots,n,$
where $\langle \cdot,\cdot \rangle$ denotes the
$H^{1/2}(\partial\Omega),H^{-1/2}(\partial\Omega)$ duality pair. 
It follows from the relation \eqref{eq:lv:dtn} between the DtN maps 
that for absorptive potentials $q\geq 0$ there is a 
unique conductivity $\sigma$ satisfying \eqref{eq:lv:q} and 
$\sigma|_{\partial\Omega} = 1$, so that
\begin{equation}
 \langle \phi_i , \Lambda_{1,q} \phi_j \rangle
 = \langle \phi_i , \sigma^{-1/2} \circ \Lambda_{\sigma,0}
 \circ \sigma^{-1/2} \phi_j\rangle - \frac{1}{2}\langle \phi_i , [\vn\cdot
 \nabla(\sigma^{1/2})] \phi_j \rangle,
 \label{eq:k}
\end{equation}
for $ i,j= 1,\ldots,n$. Since $\sigma|_{\partial\Omega} = 1$, we can
simplify the first term in \eqref{eq:k}, and the second term vanishes
for $i\neq j$ because the $\phi_i$ have disjoint supports. We then get
\begin{equation}
\langle \phi_i , \Lambda_{1,q} \phi_j \rangle  = \langle \phi_i , \Lambda_{\sigma,0} \phi_j
 \rangle,~\text{for $i,j= 1,\ldots,n$, $i\neq j$},
 \label{eq:knondiag}
\end{equation}
so the off-diagonal entries of the data matrix  are  lumped measurements 
of the DtN map for the associated conductivity problem. We denote this map 
by $\vM(q)$, and define its entries $M_{ij}$ as  
\begin{equation}
 M_{i,j} = \left\{  \begin{array}{ll} \langle \phi_i , \Lambda_{1,q} \phi_j \rangle , \quad & i \ne j, \\
   -\displaystyle\sum_{r=1\ldots n, r\neq i}\langle \phi_i , \Lambda_{1,q} \phi_r \rangle ,  & i = j.
   \end{array}
   \right.
 \label{eq:smeasmat}
\end{equation}
This definition satisfies the conservation of currents relation $\vM
\vone = \vzero$, for $\vone \in \mathbb{R}^B$, 
the vector of all ones at the boundary nodes. Since the measurement
matrix $\vM$ corresponds to taking measurements for a conductivity
problem, we can use Theorem 1 in \cite{Borcea:2008:EIT} to get the following
result.
\begin{theorem}
 \label{prop:compat}
 Let $q \geq 0$  and $\vM(q)$ be the $n\times n$ measurement matrix of
 $\Lambda_{1,q}$ obtained as in \eqref{eq:smeasmat}. There is a
 unique  reduced model, which is the resistor network with DtN map equal to $\vM(q)$, graph 
 $C\big(n(n-1)/2,n\big)$,  and discrete conductivity $\vgamma\big(\vM(q)\big)$.
\end{theorem}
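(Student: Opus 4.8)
The plan is to reduce Theorem~\ref{prop:compat} directly to the recoverability result for critical circular planar resistor networks, namely Theorem~\ref{thm:recoverability}, together with the cited realizability theorem (Theorem~1 of \cite{Borcea:2008:EIT}). First I would observe that, by construction \eqref{eq:smeasmat}, the matrix $\vM(q)$ is symmetric (inherited from the symmetry of $\Lambda_{1,q}$ tested against the same family $\phi_1,\dots,\phi_n$) and satisfies the conservation law $\vM(q)\vone=\vzero$, as already noted after \eqref{eq:smeasmat}. By \eqref{eq:knondiag}, its off-diagonal entries coincide with the lumped measurements of $\Lambda_{\sigma,0}$ for the conductivity $\sigma=s^2$ arising from the strong maximum principle applied to \eqref{eq:maxp} — this $\sigma$ is positive, bounded, $C^2$ near the boundary, and equal to $1$ on $\partial\Omega$. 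Hence $\vM(q)$ is, entrywise, the lumped DtN data of a genuine continuum conductivity problem, which is precisely the hypothesis under which Theorem~1 of \cite{Borcea:2008:EIT} produces a unique critical circular planar resistor network (with graph $C\big(n(n-1)/2,n\big)$, the critical member of the $C(m,n)$ family identified after Theorem~\ref{thm:recoverability}) whose DtN map equals $\vM(q)$.

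The key steps, in order, are: (1) verify that $\vM(q)$ lies in the set of matrices that are DtN maps of continuum conductivity problems lumped through the $\phi_j$'s — this is where \eqref{eq:lv:dtn}, the assumption $\sigma|_{\partial\Omega}=1$, the disjointness of the supports of the $\phi_j$, and the maximum-principle construction of $\sigma$ all enter, giving \eqref{eq:k}--\eqref{eq:knondiag}; (2) invoke Theorem~1 of \cite{Borcea:2008:EIT} to obtain \emph{existence} of a resistor network on $C\big(n(n-1)/2,n\big)$ with positive conductivity whose discrete DtN map is exactly $\vM(q)$; (3) invoke Theorem~\ref{thm:recoverability} — criticality of $C\big(n(n-1)/2,n\big)$ plus the matching count $|E| = n(n-1)/2 = $ number of independent entries of a symmetric, row-sum-zero $n\times n$ matrix — to conclude that this conductivity is \emph{unique}, so the reduced model is well defined; and (4) package the resulting map as $\vgamma\big(\vM(q)\big)$, which is well defined since $\vM(q)$ is itself uniquely determined by $q$ and the fixed choice of $\phi_j$'s.

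The main obstacle — and the only nontrivial point beyond bookkeeping — is step~(1): showing that $\vM(q)$ genuinely falls in the class of admissible lumped conductivity data rather than merely having the right algebraic shape. The subtlety is that \eqref{eq:knondiag} only controls the off-diagonal entries, while the diagonal of $\vM(q)$ has been \emph{redefined} in \eqref{eq:smeasmat} to enforce $\vM(q)\vone=\vzero$ rather than taken from $\Lambda_{1,q}$ directly; one must check that this redefinition yields exactly the diagonal that the conductivity DtN map $\Lambda_{\sigma,0}$ (lumped through the $\phi_j$) would have, which follows because $\Lambda_{\sigma,0}$ also annihilates constants, so its lumped version has the same row-sum-zero property, forcing the same diagonal once the off-diagonals agree. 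I expect the remaining steps to be essentially citations: Theorem~\ref{thm:recoverability} and Theorem~1 of \cite{Borcea:2008:EIT} do all the heavy lifting, and the only thing to be careful about is confirming the hypotheses of the latter (positivity and boundary regularity of $\sigma$, criticality and edge count of the graph) are met, which they are by the discussion following Theorem~\ref{thm:recoverability} and the footnote on the independent entries of $\vLambda_{\vgamma,\vzero}$.
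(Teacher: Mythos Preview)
Your proposal is correct and follows exactly the paper's approach, which consists of the single sentence preceding the theorem: once \eqref{eq:knondiag} identifies $\vM(q)$ with the lumped conductivity data for $\sigma$, Theorem~1 of \cite{Borcea:2008:EIT} yields both existence and uniqueness on $C\big(n(n-1)/2,n\big)$, so your separate appeal to Theorem~\ref{thm:recoverability} in step~(3) is redundant. One minor quibble: your diagonal argument assumes the raw lumped matrix $\langle\phi_i,\Lambda_{\sigma,0}\phi_j\rangle$ has zero row sums, which fails because $\sum_j\phi_j$ is not the constant function~$1$; this is harmless, however, since the measurement matrix in \cite{Borcea:2008:EIT} is defined with the same diagonal redefinition as \eqref{eq:smeasmat}, so $\vM(q)$ matches it by construction rather than by the annihilation-of-constants argument you sketch.
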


\subsection{From resistor networks to 
Schr\"odinger potentials}
\label{sec:solving}
We have now shown that there is a unique  parametric reduced model 
(resistor network) with DtN map $\vM(\vq)$ and graph $C(n(n-1)/2,n)$.
It remains to determine the Schr\"odinger potential from its discrete 
conductivity $\vgamma\big(\vM(\vq)\big)$.  

A naive approach to reconstructing the potential would be to first obtain a continuum conductivity 
 $\sigma$ from $\vgamma(\vM(q))$ e.g., using the method
based on resistor networks from \cite{Borcea:2008:EIT}, and then 
use the continuum Liouville identity \eqref{eq:lv:q} to get $q$ from 
$\sigma$. This does not work because  $\sigma$ is never exact and 
errors are amplified when taking the Laplacian in \eqref{eq:lv:q}. To illustrate this, we show in 
figure~\ref{fig:twoq} that the conductivities corresponding to two very 
different Schr\"odinger potentials are hard to tell apart.

\begin{figure}
 \includegraphics[width=0.7\textwidth]{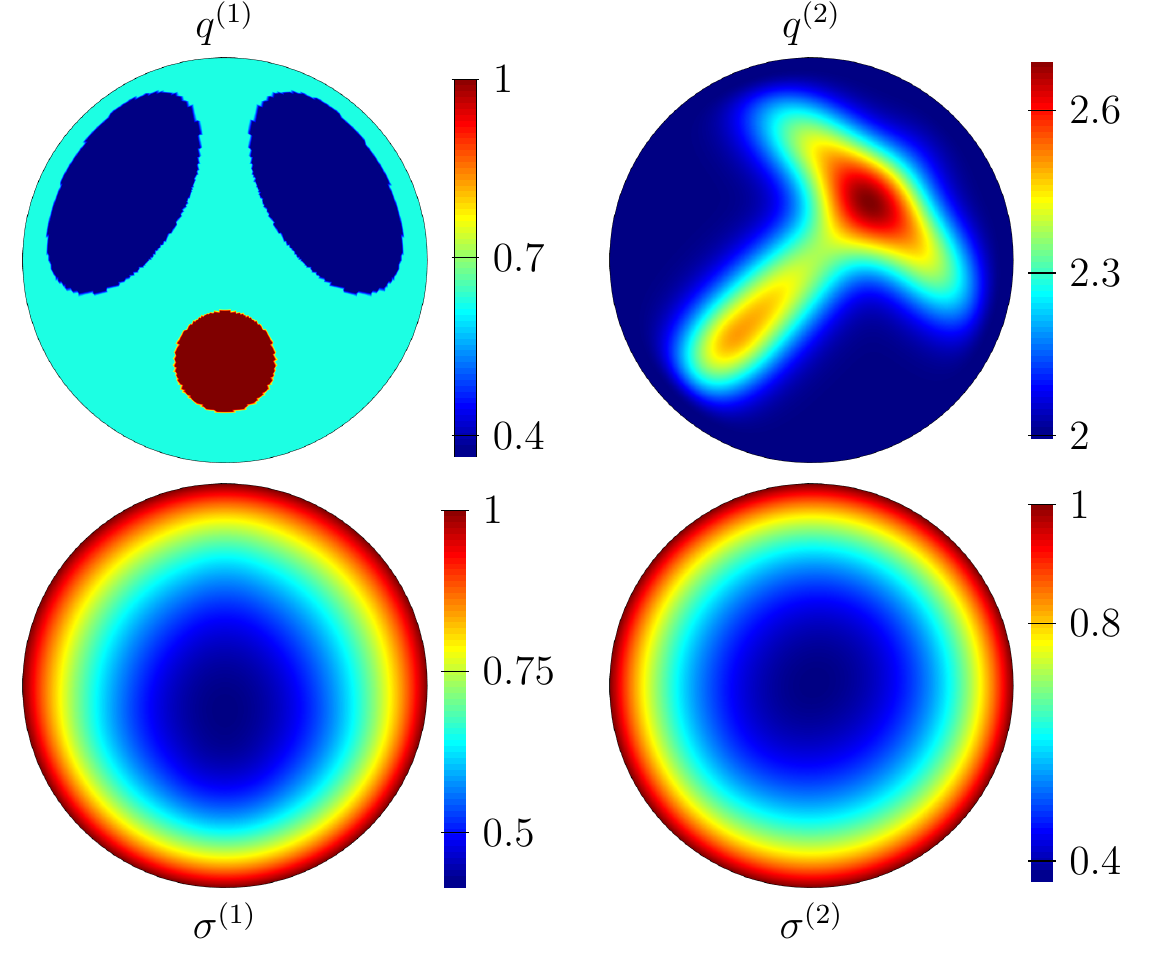}
 \caption{Conductivities $\sigma^{(i)}$, corresponding
 to different Schr\"odinger potentials $q^{(i)}$, $i=1,2$.}
 \label{fig:twoq}
\end{figure}

Our inversion algorithm takes a different route. Its outline is given below, and all the steps are described in detail in the 
next section.
\begin{enumerate}
\item Choose the number $n$ of boundary nodes of the graph $C(n(n-1)/2,n)$. 
\item Obtain  $\vM(q)$ from the given $\Lambda_{1,q}$ and also $\vM(0)$ from 
the calculated $\Lambda_{1,0}$. 
\item Find discrete conductivities $\vgamma_1 \equiv \vgamma(\vM(q))$ and 
$\vgamma_0 \equiv \vgamma(\vM(0))$. 
\item Estimate the average $q_{\avg}$ of the unknown 
potential $q$ from  $\vM(q)$. 
\item On the line graph associated to  $C(n(n-1)/2,n)$
compute the geometric averages $\widetilde{\vgamma}_0$ from 
$\vgamma(\vM(0))$ and form the discrete Laplacian 
$\widetilde{\vL}_{\widetilde{\vgamma}_0,\vzero}$.
\item Use the discrete generalized Liouville identity identity 
\eqref{eq:dlv}--\eqref{eq:dlv:q} with $\vgamma_1$, $\vgamma_0$ 
from step (3) and $\widetilde{\vL}_{\widetilde{\vgamma}_0,\vzero}$ 
from step (5) to obtain the discrete potential
$\widetilde{\vq}$.
\item Use the discrete  potential $\widetilde{\vq}$ and 
the estimated average $q_{\avg}$ to reconstruct the unknown Schr\"odinger 
potential $q$. 
\end{enumerate} 

The use of $\vgamma_0$ at step (3) and of $q_{\avg}$ at step (7)  is a calibration
so that the reconstruction is exact for $q \equiv 0$ and $q \equiv q_{\avg}$. 
In the context of parametric model reduction, we work with three reduced models:
one for the unknown $q$ which we wish to determine, one for the zero potential and one 
for the constant $q_{\avg}$ potential, the average of the unknown $q$. We ask that the reduced 
models share the same discrete Laplacian, calculated for $q = 0$.  This discrete 
Laplacian is a finite difference scheme with steps defined by $\widetilde \vgamma_0$. 
The average potential at step (7) is used as a scaling. We motivate this
scaling by noticing that when $\Omega$ is the unit disk and $q$ is
radially symmetric (i.e. $q \equiv q(r)$), the problem of finding $q$ from
$\vM(q)$ is related to the inverse spectral problem of finding $q$ from the spectrum of
the operator $-\Delta + q$ with Dirichlet boundary conditions
\cite{Borcea:2012:RNA}. To solve the inverse spectral Schr\"odinger problem
it is essential to estimate $q_{\avg}$ accurately as it corresponds to a shift in the
spectrum (see e.g. \cite{Chadan:1997:IIS,borcea2005continuum}). In our case it is
difficult to relate $\vM(q)$ to the spectrum of $-\Delta + q$, so we use
step (7) to ensure that the reconstruction is correct for the constant
$q_{\avg}$.
If the parametric reduced models
share the discrete Laplacian i.e., they give accurate approximations of $\vM(q)$ for a set of functions $q$ 
on the grid defined by $\vgamma_0$, we expect that they account for   the 
asymptotes of the spectrum.

\section{From discrete Schr\"odinger potentials to continuum ones}
\label{sec:dtc}
A numerical estimate of  the solution of the inverse Schr\"odinger problem
is conventionally obtained with the optimization (output least-squares) formulation
\begin{equation}
 \min_{q \geq 0} \frac{1}{2} \| \vK(q) - \vK(q_{\true}) \|_F^2,
 \label{eq:ols}
\end{equation}
where $\| \cdot \|_F$ is the Frobenius norm and $\vK(q)$ denote measurements of $\Lambda_{1,q}$. 
This formulation requires regularization,  based on prior information about $q$ that may not be available. The method is 
widely used but  has a high computational cost, tends to get stuck in local minima
and gives solutions that are biased by the priors.

Instead of minimizing the misfit in the data, we apply a non-linear 
mapping $\vQ: \real^{n\times n} \to \real^{n(n-1)/2}$ that acts as 
an approximate inverse of $\vM(q)$, our lumped measurements of $\Lambda_{1,q}$.  Thus, we solve 
instead the optimization problem
\begin{equation}
 \min_{q \geq 0} \frac{1}{2} \| \vQ(\vM(q)) -
 \vQ(\vM(q_{\true}))
 \|_F^2,
 \label{eq:pols}
\end{equation}
with the Gauss-Newton iteration
\begin{equation}
\begin{aligned}
 \mbox{for}~&k=0,1,\ldots\\
 &q_{k+1} = q_k - D\vM[q_k]^\dagger
 D\vQ[\vM(q_k)]^\dagger( \vQ(\vM(q_k)) -
 \vQ(\vM(q_{\true})) ).
\end{aligned}
\label{eq:gn}
\end{equation}
Here $\dagger$ denotes the
Moore-Penrose pseudoinverse \cite{Golub:2013:MC} and 
$D\vM[q] : L^2(\Omega) \to \real^{n \times n}$ and 
$D\vQ[\vM] : \real^{n \times n} \to \real^{n(n-1)/2}$ are the 
Jacobians of 
the mappings $\vM(q)$ and $\vQ(\vM)$.  Following the approach in \cite{Borcea:2008:EIT} for the inverse conductivity 
problem, we define the nonlinear preconditioner $\vQ(\vM)$ by solving a
discrete Schr\"odinger inverse problem with data $\vM$
(\S\ref{sec:rec}). The initial guess  $q_0$ is obtained by a linear
interpolation of the entries of $\vQ(\vM(q_{\true}))$ on a carefully
chosen grid \S\ref{sec:init}. This ensures that the Gauss-Newton method converges
quickly, usually in one step.

\subsection{The nonlinear preconditioner}
\label{sec:rec}
The mapping $\vQ(\vM)$ is defined by solving a discrete inverse
Schr\"odinger problem with data $\vM$, and ensuring that 
\[
 \vQ(\vM(0)) = \vzero ~ \text{and} ~ \vQ(\vM(q_{\avg})) = q_{\avg} \vone,
\]
with $q_\avg$ estimated as in \S\ref{sec:avgq}.
 The
computation of $\vQ$ involves the following steps:
\begin{enumerate}
 \item Find the parametric reduced models for the zero potential, the constant potential $q_\avg$ and 
 the unknown $q$. That is, determine the discrete conductivities $\vgamma_0
 \equiv \vgamma(\vM(0))$, 
 $\vgamma_{\avg} \equiv \vgamma(\vM(q_{\avg}))$, and
 $\vgamma(\vM)$.
  \item Use the discrete Liouville identity \eqref{eq:dlv:q} with
 reference conductivity $\vgamma_0$ to find the discrete
 potentials $\widetilde{\vq}(\vgamma(\vM))$ and
 $\widetilde{\vq}(\vgamma_{\avg})$ from 
  $\vgamma(\vM)$ and $\vgamma_{\avg}$.

 \item The map $\vQ$ is defined by 
 \begin{equation}
 \label{eq:defQM}
  \vQ(\vM) \equiv q_{\avg}
  \frac{\widetilde{\vq}(\vgamma(\vM))}{\widetilde{\vq}(\vgamma_{\avg})},
 \end{equation}
where the division is understood componentwise. 
\end{enumerate}
The scaling at step (3) ensures that the map $\vQ(\vM)$ gives exact reconstruction 
of the constant potential $q = q_{\avg}$, under the following assumption:
\begin{assumption}
\label{assume1}
All entries of $\widetilde{\vq}({\vgamma_\avg})$ are nonzero.
\end{assumption} 
\noindent  
It is not clear how to prove that this holds, but the assumption has  been verified numerically for many 
graphs and values of $q_{\avg}$ on the unit disk.

\subsection{The sensitivity functions}
\label{sec:sensitivity}

A good preconditioning map $\vQ$  means that $\vQ(\vM)$ is  some approximation of the identity.
We illustrate this numerically in figure~\ref{fig:dqdq} where
we show the sensitivity or Fr\'echet derivative of some of the entries
of the vector $\vQ(\vM(q))$ to changes in $q$. These are  
the ``rows'' of the Jacobian of the preconditioned mapping, 
$D\vQ[\vM(q)] D\vM[q]$. 

The sensitivity  functions are
evidently well localized, so to first order approximation the entries
$\vQ(\vM(q))$ are local averages of $q$. However $D\vQ[\vM(q)]$ is not a
preconditioner in the usual sense because it has a non-trivial
nullspace, as stated in the next proposition. This is handled by taking its pseudoinverse  in the Gauss-Newton iteration 
\eqref{eq:gn}.

\begin{figure}
 \begin{tabular}{c@{\hspace{1em}}c@{\hspace{1em}}c}
  \includegraphics[width=0.24\textwidth]{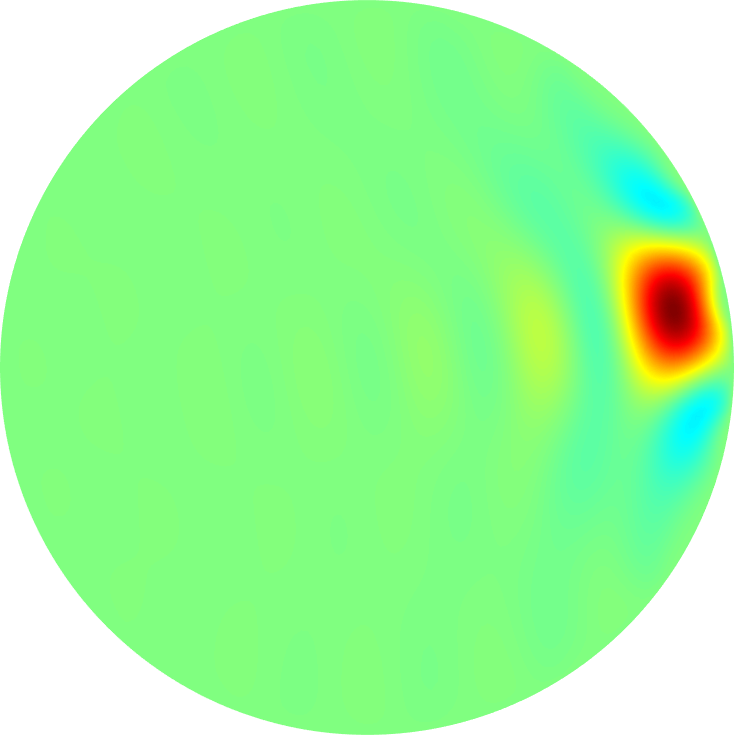} &
  \includegraphics[width=0.24\textwidth]{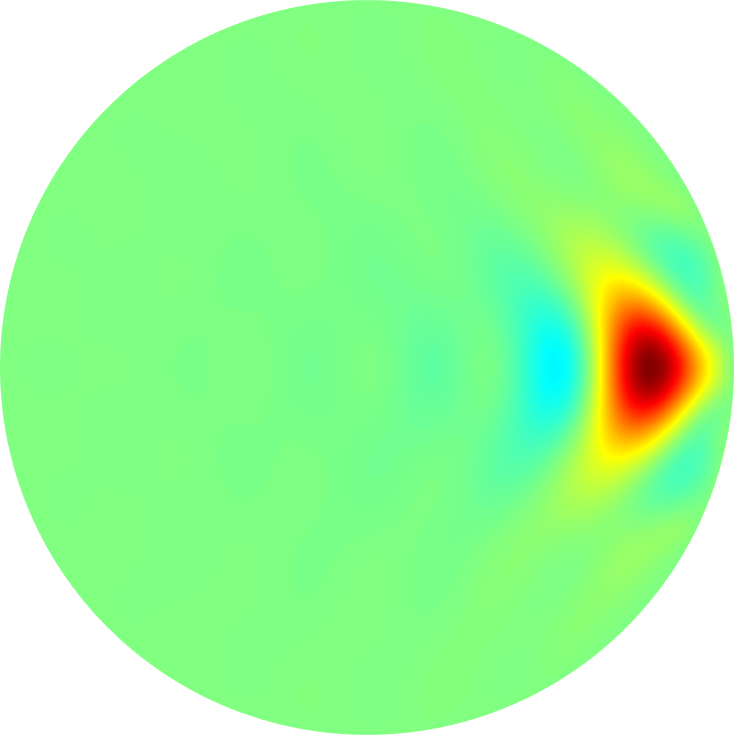} &
  \includegraphics[width=0.24\textwidth]{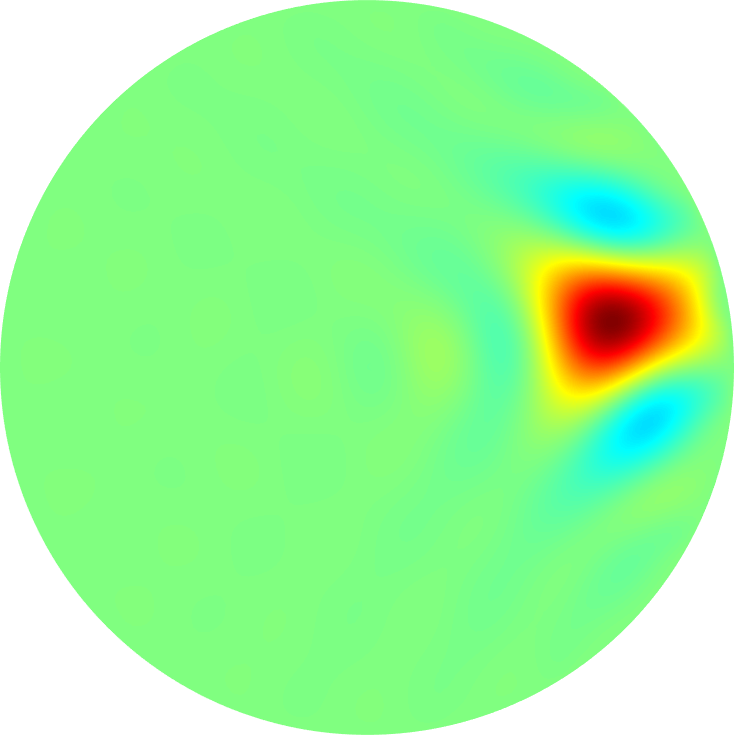} \\
  \includegraphics[width=0.24\textwidth]{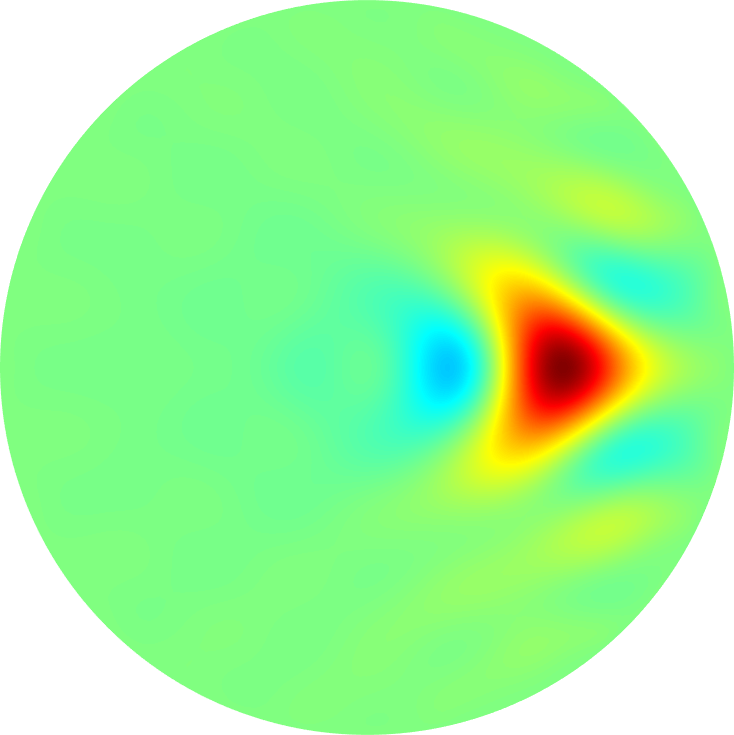} &
  \includegraphics[width=0.24\textwidth]{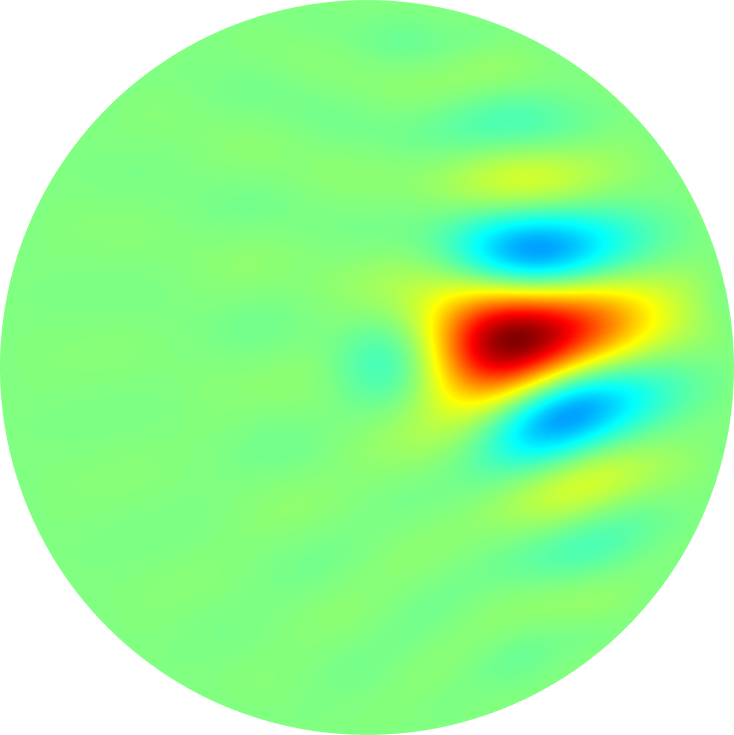} &
  \includegraphics[width=0.24\textwidth]{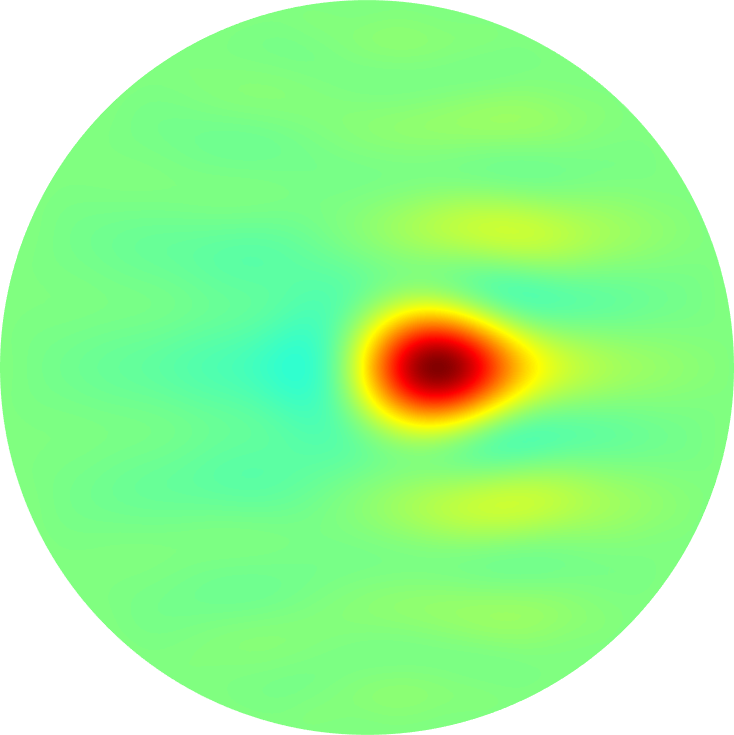}
 \end{tabular}
 \caption{Sensitivity functions for $n=17$, $q = 1$, i.e.
 the ``rows'' of $D\vQ[\vM(q)] D\vM[q]$. The other
 sensitivity functions can be obtained by rotations of integer multiples
 of $2\pi/17$.}
 \label{fig:dqdq}
\end{figure}
\begin{proposition}
\label{prop:null}
The Jacobian $D\vQ[\vM] : \real^{n\times n} \to
\real^{n(n-1)/2}$ of $\vQ[\vM]$ has rank $n(n-1)/2 - 1$. Its left null space is 
spanned by  the
 vector 
 \[
  \vz = \widetilde{\vq}_{\avg} \odot
  \frac{\vgamma(\vM)}{\vgamma_0}
 ~\mbox{i.e.,  $D\vQ[\vM]^T \vz = \vzero$.} 
 \]
 Its right null space is spanned by $\vM$ i.e., 
  $D\vQ[\vM] \vM = \vzero$.
\end{proposition}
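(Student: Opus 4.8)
The plan is to differentiate the composition defining $\vQ$ and reduce every claim to the spectral structure of a weighted graph Laplacian on the line graph. Write $\widetilde{\vq}_{\avg} := \widetilde{\vq}(\vgamma_{\avg})$, which has only nonzero entries by Assumption~\ref{assume1}, so that \eqref{eq:defQM} reads $\vQ(\vM) = q_{\avg}\,\diag(\widetilde{\vq}_{\avg})^{-1}\,\widetilde{\vq}(\vgamma(\vM))$ and the chain rule gives
\[
 D\vQ[\vM] = q_{\avg}\,\diag(\widetilde{\vq}_{\avg})^{-1}\,D\widetilde{\vq}[\vgamma(\vM)]\,D\vgamma[\vM].
\]
The outer factor is an invertible diagonal matrix, and the Jacobian $D\vgamma[\vM]$ of the network reconstruction is invertible as well: by Theorem~\ref{thm:recoverability} the map $\vLambda\mapsto\vgamma(\vLambda)$ is one-to-one on the DtN maps of the critical graph $C(n(n-1)/2,n)$, it is given by the smooth algebraic operations of layer peeling, and its inverse $\vgamma\mapsto\vLambda_{\vgamma,\vzero}$ is smooth, so the inverse function theorem applies on the natural domain of $\vQ$ — the open set of admissible DtN maps inside the $n(n-1)/2$-dimensional linear space of symmetric matrices with zero row sums. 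Consequently $D\vQ[\vM]$ has the same rank as $D\widetilde{\vq}[\vgamma(\vM)]$, while its right and left null spaces are the images of those of $D\widetilde{\vq}[\vgamma(\vM)]$ under $D\vgamma[\vM]^{-1}$ and $\diag(\widetilde{\vq}_{\avg})$, respectively.

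It then remains to analyze $D\widetilde{\vq}[\vgamma]$. Setting $\vrho := (\vgamma/\vgamma_0)^{1/2}$, identity \eqref{eq:dlv:q} reads $\widetilde{\vq}\odot\vrho = -\widetilde{\vL}_{\widetilde{\vgamma}_0,\vzero}\vrho$; differentiating in $\vgamma$ (with $\widetilde{\vL}_{\widetilde{\vgamma}_0,\vzero}$ fixed) and rearranging gives
\[
 D\widetilde{\vq}[\vgamma] = -\diag(\vrho)^{-1}\,\widetilde{\vL}_{\widetilde{\vgamma}_0,\widetilde{\vq}}\,D\vrho[\vgamma],
\]
with $\widetilde{\vq}$ the Liouville potential \eqref{eq:dlv:q} and $D\vrho[\vgamma]$ an invertible diagonal matrix. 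Thus, up to invertible diagonal factors on both sides, $D\widetilde{\vq}[\vgamma]$ is the line-graph Schr\"odinger operator $\widetilde{\vL}_{\widetilde{\vgamma}_0,\widetilde{\vq}}$, which by Theorem~\ref{thm:dlv} and \eqref{eq:dlv} is congruent, via $\diag(\vrho)^{-1}$, to the line-graph Laplacian $\widetilde{\vL}_{\widetilde{\vgamma}_1,\vzero}$ with $\vgamma_1=\vgamma$. Because the line graph of the connected graph $C(n(n-1)/2,n)$ is connected and $\widetilde{\vgamma}_1>0$, this is a positive semidefinite graph Laplacian with one-dimensional null space spanned by $\vone$; since $|\widetilde{V}|=|E|=n(n-1)/2$, it has rank $n(n-1)/2-1$, which is the rank assertion.

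The two null spaces follow by tracing back. For the right one I would invoke homogeneity: $\vL_{\vgamma,\vzero}$ is linear in $\vgamma$ and the Schur complement \eqref{eq:dtn} is positively homogeneous of degree one, so $\vLambda_{t\vgamma,\vzero}=t\,\vLambda_{\vgamma,\vzero}$, whence $\vgamma(\cdot)$ is positively homogeneous of degree one in $\vM$ and Euler's identity gives $D\vgamma[\vM]\vM=\vgamma(\vM)$; on the other hand $\widetilde{\vq}$, regarded as a function of $\vgamma$ with $\vgamma_0$ held fixed, is homogeneous of degree zero (the $\vrho$ and $\vrho^{-1}$ factors cancel the scaling), so $D\widetilde{\vq}[\vgamma]\vgamma=\vzero$. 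Composing, $D\vQ[\vM]\vM = q_{\avg}\diag(\widetilde{\vq}_{\avg})^{-1}D\widetilde{\vq}[\vgamma(\vM)]\vgamma(\vM)=\vzero$, and since the rank count forces the nullity of $D\vQ[\vM]$ to equal $1$ and $\vM\neq\vzero$, the right null space is exactly $\mathrm{span}(\vM)$. For the left one, the null vector $\vone$ of $\widetilde{\vL}_{\widetilde{\vgamma}_1,\vzero}$ corresponds, under the congruence, to the null vector $\vrho$ of $\widetilde{\vL}_{\widetilde{\vgamma}_0,\widetilde{\vq}}$, hence to the left null vector $\vrho\odot\vrho=\vgamma(\vM)/\vgamma_0$ of $D\widetilde{\vq}[\vgamma(\vM)]$, and finally, after multiplication by $\diag(\widetilde{\vq}_{\avg})$, to $\vz = \widetilde{\vq}_{\avg}\odot(\vgamma(\vM)/\vgamma_0)$, which is nonzero by Assumption~\ref{assume1} and the positivity of $\vgamma(\vM)$ and $\vgamma_0$ and therefore spans the one-dimensional left null space.

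The main obstacle is justifying that $D\vgamma[\vM]$ is invertible: beyond recoverability (Theorem~\ref{thm:recoverability}) one needs the differentiability of the layer-peeling reconstruction and the correct identification of the domain — the $n(n-1)/2$-dimensional space of symmetric, zero-row-sum matrices rather than all of $\real^{n\times n}$ — which is exactly what makes the phrase ``right null space spanned by $\vM$'' meaningful. A secondary point is the connectedness of the line graph of $C(n(n-1)/2,n)$, needed to pin the null space of $\widetilde{\vL}_{\widetilde{\vgamma}_1,\vzero}$ down to $\mathrm{span}(\vone)$; this is the standard fact that the line graph of a connected graph with at least one edge is connected. The remaining steps — differentiating \eqref{eq:dlv:q} and tracking the diagonal factors — are routine.
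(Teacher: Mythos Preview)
Your proof is correct and follows the same overall architecture as the paper's: factor $D\vQ[\vM]$ via the chain rule, note that the outer diagonal and the Jacobian $D\vgamma[\vM]$ are invertible (the paper cites \cite[\S 12]{Curtis:1998:CPG} for the latter, where you invoke the inverse function theorem), and reduce everything to the null spaces of $D\widetilde{\vq}[\vgamma]$. Where you genuinely diverge is in the analysis of $D\widetilde{\vq}[\vgamma]$. The paper writes out $D\widetilde{\vq}[\vgamma]\delta\vgamma$ explicitly, then checks by hand that $\vgamma$ is a right null vector (via homogeneity of degree zero, as you also use) and that $\vgamma/\vgamma_0$ is a left null vector; it does not, however, justify that these null spaces are one-dimensional, so the rank assertion $n(n-1)/2-1$ is in fact left unproved. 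Your factorization $D\widetilde{\vq}[\vgamma]=-\diag(\vrho)^{-1}\widetilde{\vL}_{\widetilde{\vgamma}_0,\widetilde{\vq}}\,D\vrho[\vgamma]$, combined with the discrete Liouville identity \eqref{eq:dlv} to pass to the connected line-graph Laplacian $\widetilde{\vL}_{\widetilde{\vgamma}_1,\vzero}$, supplies exactly this missing piece: the Laplacian has a one-dimensional kernel spanned by $\vone$, and tracing the invertible diagonal factors back gives both the rank and the explicit null vectors in one stroke. Your discussion of the domain (symmetric, zero-row-sum matrices) and of connectedness of the line graph also makes explicit two points the paper leaves tacit.
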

\begin{proof}
We first characterize the left and right null space of the linearization
of the mapping $\widetilde{\vq}(\vgamma)$ defined by the
discrete Liouville identity \eqref{eq:dlv:q} with discrete reference
conductivity $\vgamma_0 \equiv \vgamma(\vM(0))$ and $\vgamma_1 \equiv\vgamma$.
Straightforward but lengthy calculations give that for some
$\delta\vgamma \in \real^{n(n-1)/2}$,
\[
 D\widetilde{\vq}[\vgamma] \delta\vgamma =
 - \M{\frac{\vgamma_0}{\vgamma}}^{1/2} \odot 
 \Mb{ \widetilde{\vL}_{\vgamma_0} \Mb{\frac{\delta\vgamma}{(\vgamma \odot
 \vgamma_0)^{1/2}}}} - \frac{\delta\vgamma}{\vgamma} \odot \widetilde{\vq}(\vgamma).
\]
In particular, if we let $\delta \vgamma = \vgamma$ we obtain from  \eqref{eq:dlv:q}
that 
\[
D\widetilde{\vq}[\vgamma]\vgamma = \vzero,
\]
so $\vgamma$ is a right null vector of the Jacobian. This  may 
be seen directly from \eqref{eq:dlv:q}, by realizing that the function $\vgamma \to \widetilde \vq(\vgamma)$ is homogeneous of degree zero
\footnote{
 The multiplication of  
$\vgamma_1 \equiv \vgamma$ in \eqref{eq:dlv:q}  by any  constant $\alpha > 0$ gives $\widetilde \vq(\alpha \vgamma) = \widetilde \vq(\vgamma)$, because the constants cancel out.}.

A similar calculation
gives that $\vgamma/\vgamma_0$ is a left null-vector for
$D\widetilde{\vq}[\vgamma]$,
\[
D\widetilde{\vq}[\vgamma]^T \M{\frac{\vgamma}{\vgamma_0}} = \vzero.
\]
Going back to the definition of $\vQ(\vM)$, we recall that for critical circular
planar graphs, the Jacobian $D\vgamma[\vM]$ is invertible
\cite[\S 12]{Curtis:1998:CPG}. Thus, step (1) in the definition of $\vQ$ gives an 
invertible linearization map. Step (3) is also invertible because it involves the multiplication  with a diagonal matrix with
nonzero entries, so the left and right
null-vectors of $D\vQ[\vM]$ can be found from those of
$D\widetilde{\vq}[\vgamma(\vM)]$.

The left null vector follows from $\vz/\widetilde{\vq}_{\avg} =
\vgamma(\vM)/\vgamma_0$ being in
the left nullspace of $D\widetilde{\vq}[\vgamma(\vM)]$, as shown above, where the division is 
understood componentwise.  
For the right null vector, consider a  scalar
$h > -1$. Then by the homogeneity of order $1$ between the DtN map and the conductors in the network, we must have
\[
 \widetilde{\vq}(\vgamma((1+h)\vM)) =
 \widetilde{\vq}((1+h)\vgamma(\vM)) =
 \widetilde{\vq}(\vgamma(\vM)).
\]
This and definition \eqref{eq:defQM} imply that $D\vQ[\vM] \vM = \vzero$. 
\end{proof}

\subsection{The initial guess}
\label{sec:init}
The localization of the sensitivity functions displayed in Figure
\ref{fig:dqdq} allows us to view $\vQ(\vM(q_{\true}))$ as
an approximation of $q_{\true}$ at the maxima of the sensitivity functions.
These maxima define the nodes of the \emph{sensitivity grids}
used in \cite{Borcea:2010:PRN} for the inverse conductivity problem.  The sensitivity grids depend weakly 
on the Schr\"odinger potential, as illustrated numerically in
figure~\ref{fig:qgrid} for two constant Schr\"odinger potentials. Thus, we can compute them ahead of time.

To obtain a good initial guess $q_0$ of the iteration \eqref{eq:gn}, we
linearly interpolate the values of $\vQ(\vM(q_{\true}))$ on a Delaunay
triangulation of the sensitivity grid nodes. Some of these initial
guesses are illustrated in figures~\ref{fig:gnsm} and \ref{fig:gnpc},
with the same color scales as the corresponding $q_{\true}$. We
note that $q_0$ is already close to the actual
Schr\"odinger potential, capturing not only its geometrical features but also its
magnitudes.  Moreover the computation of $q_0$ is inexpensive,
because all the operations involved in the calculation of $\vQ(\vM)$, with
given $\vM$, are carried on a relatively small network.

\begin{figure}
 \begin{tabular}{c@{\hspace{1em}}c}
 \includegraphics[width=0.4\textwidth]{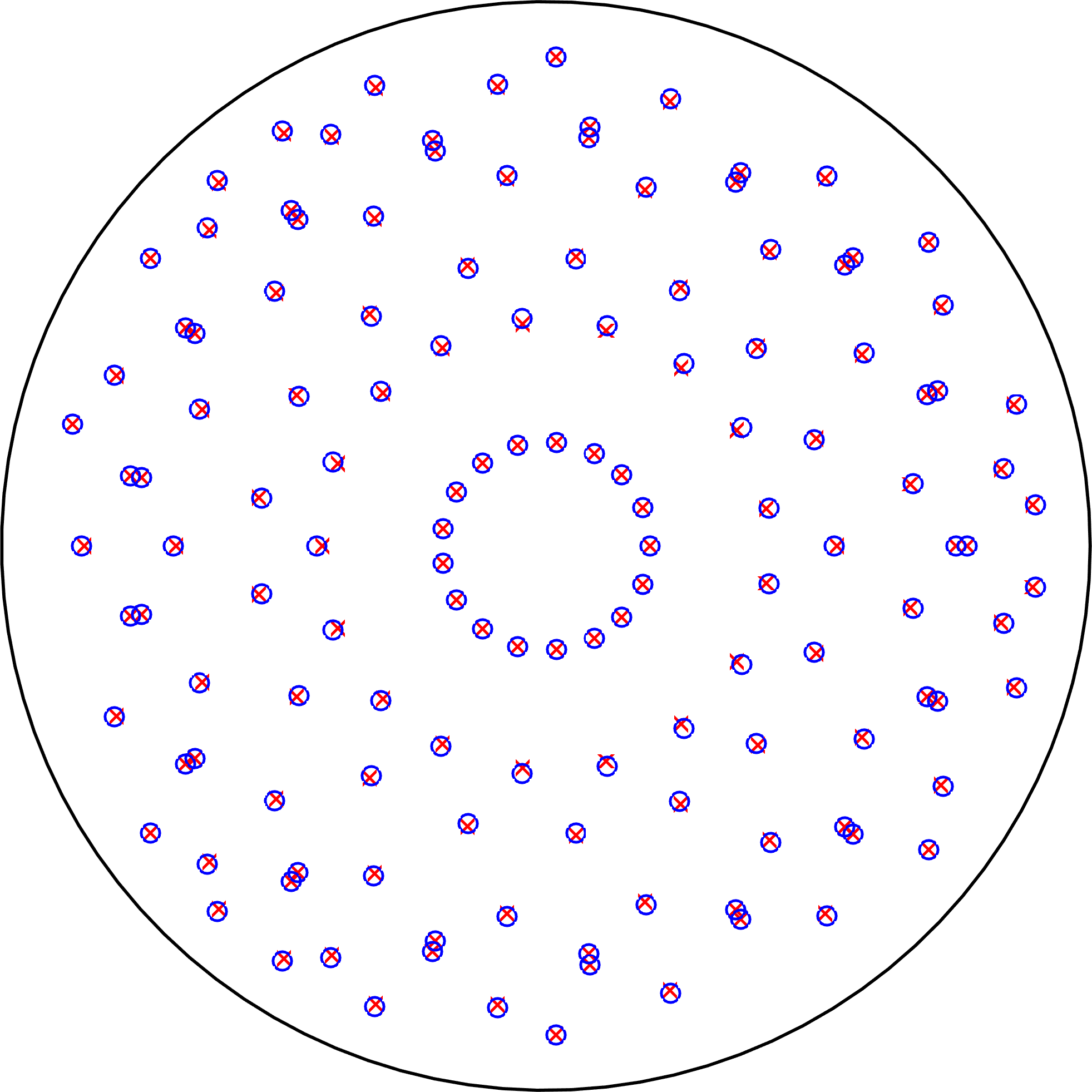} &
 \includegraphics[width=0.4\textwidth]{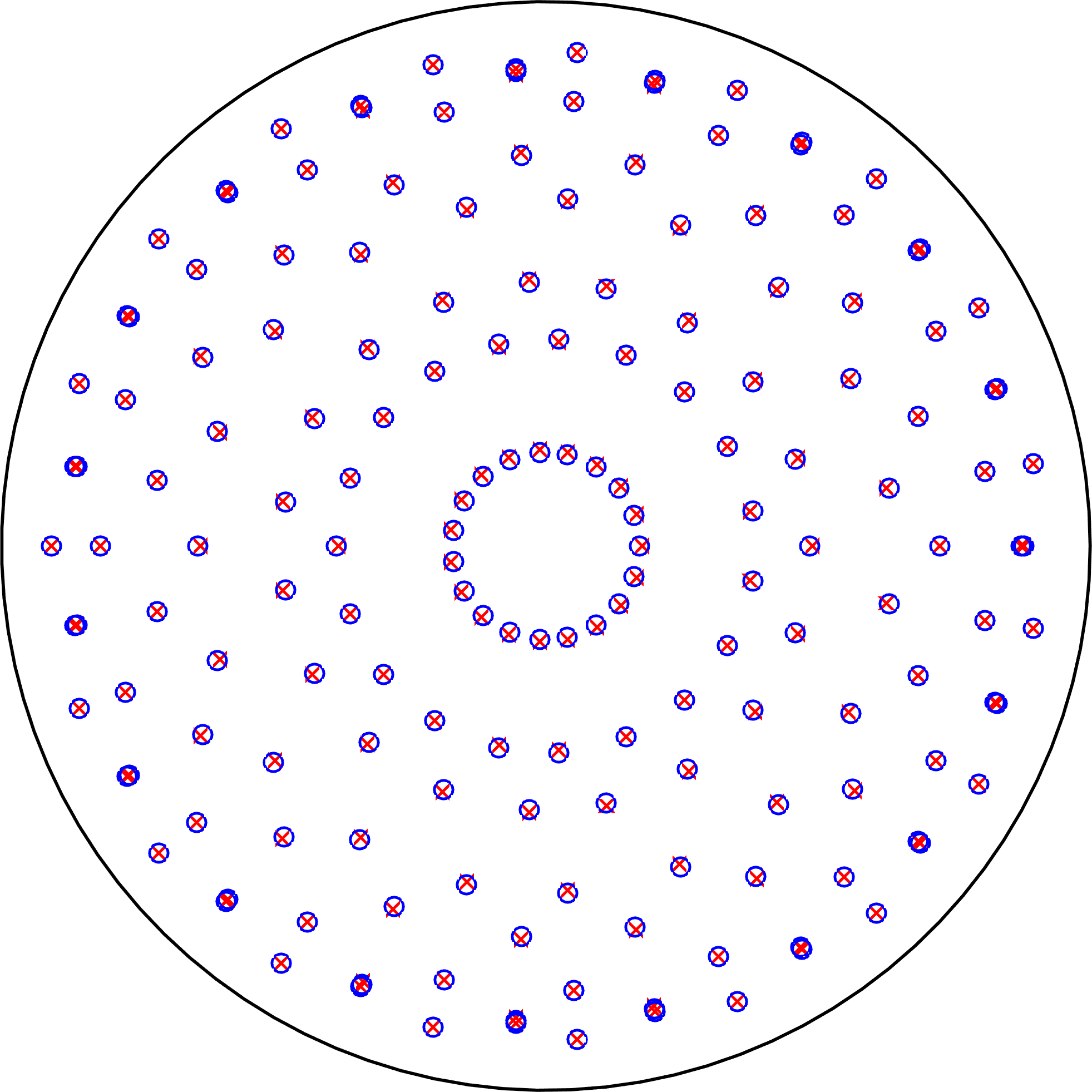}\\
 $n=17$ & $n=19$
 \end{tabular}
 \caption{Sensitivity grids. The ``x'' are for $q=1$ and the
 ``$\circ$'' for $q=3$.}
 \label{fig:qgrid}
\end{figure}

\section{Numerical results}
\label{sec:num}

We show here numerical reconstructions obtained from data  
$\vM(q_{\true})$ approximated by solving  (\ref{eq:sq})
numerically with a finite volume method on a fine grid, and then using 
the lumped measurement formulas (\ref{eq:smeasmat}).
With the same finite volume method, fine grid and measurement formulas,
we also compute $\vM(0)$  so that we can obtain 
$\vgamma_0 \equiv \vgamma(\vM(0))$ needed for the computation 
of $\vQ[\vM]$. This is to avoid discretization discrepancies in the calculation
of the preconditioner mapping. In practice, one may use any fine grid that
gives a good approximation of $\vM(0)$. 

\subsection{Estimating the average Schr\"odinger potential}
\label{sec:avgq}
To compute $\vQ[\vM]$ we need to estimate the average $q_{\avg}$ 
of the Schr\"odinger potential from the data $\vM(q_{\true})$. We do this 
with a direct search. Explicitly, given some trial values $q_1,\ldots,q_m$ for the 
average, 
we estimate it as
\[
 q_{\avg} = \argmin_{q \in \{q_1,\ldots,q_m\}} \| \vM(q) -
 \vM(q_{\true}) \|_F,
\]
where $\| \cdot \|_F$ is the Frobenius norm. To avoid the inverse crime, in this estimation
we compute $\{\vM(q_j)\}_{1 \le j \le m}$ on a different fine grid than the one used for computing the
synthetic data $\vM(q_{\true})$. 

\subsection{Calculating the non-linear preconditioner mapping}
The computation of $\vQ[\vM]$ involves 
$\vgamma_{\avg}$, the discrete conductivity corresponding to the 
constant Schr\"odinger potential $q \equiv q_{\avg}$. We obtain it from 
$\vM(q_{\avg})$,  computed  as described above  for $\vM(q_{\true})$ and $\vM(0)$. The 
fine grid in the computation of $\vM(q_{\avg})$ is  the same as that in the calculation of  $\vM(0)$, to avoid discretization 
discrepancies.

\subsection{The Gauss-Newton iterations}
\label{sect:GNiter}
The reconstructions obtained with Gauss-Newton iteration 
(\ref{eq:gn}) for a smooth and a piecewise constant potential are shown 
in figures~\ref{fig:gnsm} and
\ref{fig:gnpc}, respectively. We include only the initial guess $q_0$
(which is obtained as explained in \ref{sec:init}) and the first iterate
$q_1$. Subsequent iterates $q_k$, $k\geq 2$ are not
shown because they  are indistinguishable from $q_1$.
Thus, for all practical purposes the Gauss-Newton iteration converges
in one iteration. We believe this is because the initial guess $q_0$ is
already close to $q_{\true}$, and the Jacobian of $\vQ \circ \vM$
is in some sense close to the identity, as explained in
\S\ref{sec:sensitivity}. We observe that $q_1$ seems to be a better
reconstruction than $q_0$. For example in figure~\ref{fig:gnpc}, $q_0$
has artifacts close to the center due to the Delaunay triangulation used
for the linear interpolation. These artifacts are diminished in $q_1$.

We include a typical convergence history in figure~\ref{fig:iter}. Since
the Gauss-Newton iterations  are designed to minimize the
residual \eqref{eq:pols}, we expect that the norm of the {\em
preconditioned residual} $\|\vQ(\vM(q_k)) -
\vQ(\vM(q_{\true}))\|_2^2$ decreases with $k$. This is true for the
first iteration, but then the residual stagnates. To explain this,
consider a vector $\vz_k \neq \vzero$ spanning the null space of the Jacobian of
$\vQ(\vM(q))$ evaluated at $q = q_k$.  An explicit formula for such a
vector is given in proposition~\ref{prop:null}. Clearly the Gauss-Newton
update $q_{k+1} - q_k$ defined in \eqref{eq:gn} is independent of the
component of the preconditioned residual $\vQ(\vM(q_k)) -
\vQ(\vM(q_{\true}))$ in the direction $\vz_k$. In other words, the update
\[
q_{k+1} -  q_k 
= - D\vM[q_k]^\dagger D\vQ[\vM(q_k)]^\dagger(
\vQ(\vM(q_k)) - \vQ(\vM(q_{\true})) + \alpha \vz_k),
\]
is independent of $\alpha$. We believe the stagnation of the
preconditioned residual is because the component of the preconditioned
residual along $\vz_k$ is not zero, and  the component orthogonal to
$\vz_k$ is still reduced by the iterations. To test this hypothesis, we 
include in figure~\ref{fig:iter} the norm of the {\em projected
preconditioned residual} $ \|\vP_k(\vQ(\vM(q_k)) -
\vQ(\vM(q_{\true})))\|_2^2$, where $\vP_k = \vI -
\vz_k\vz_k^T/(\vz_k^T\vz_k)$ is the projector on the subspace  orthogonal to
$z_k$. We observe that
this quantity does decrease with $k$, at least until
reaching machine precision (at $k=2$). For reference we
also include  the norm of the {\em unpreconditioned residual}, i.e. $\|
\vM(q_k) - \vM(q_{\true})\|_F^2$. This quantity remains  basically unchanged with the
iterations.

\begin{figure}
 \begin{center}
  \includegraphics[page=1,width=\textwidth]{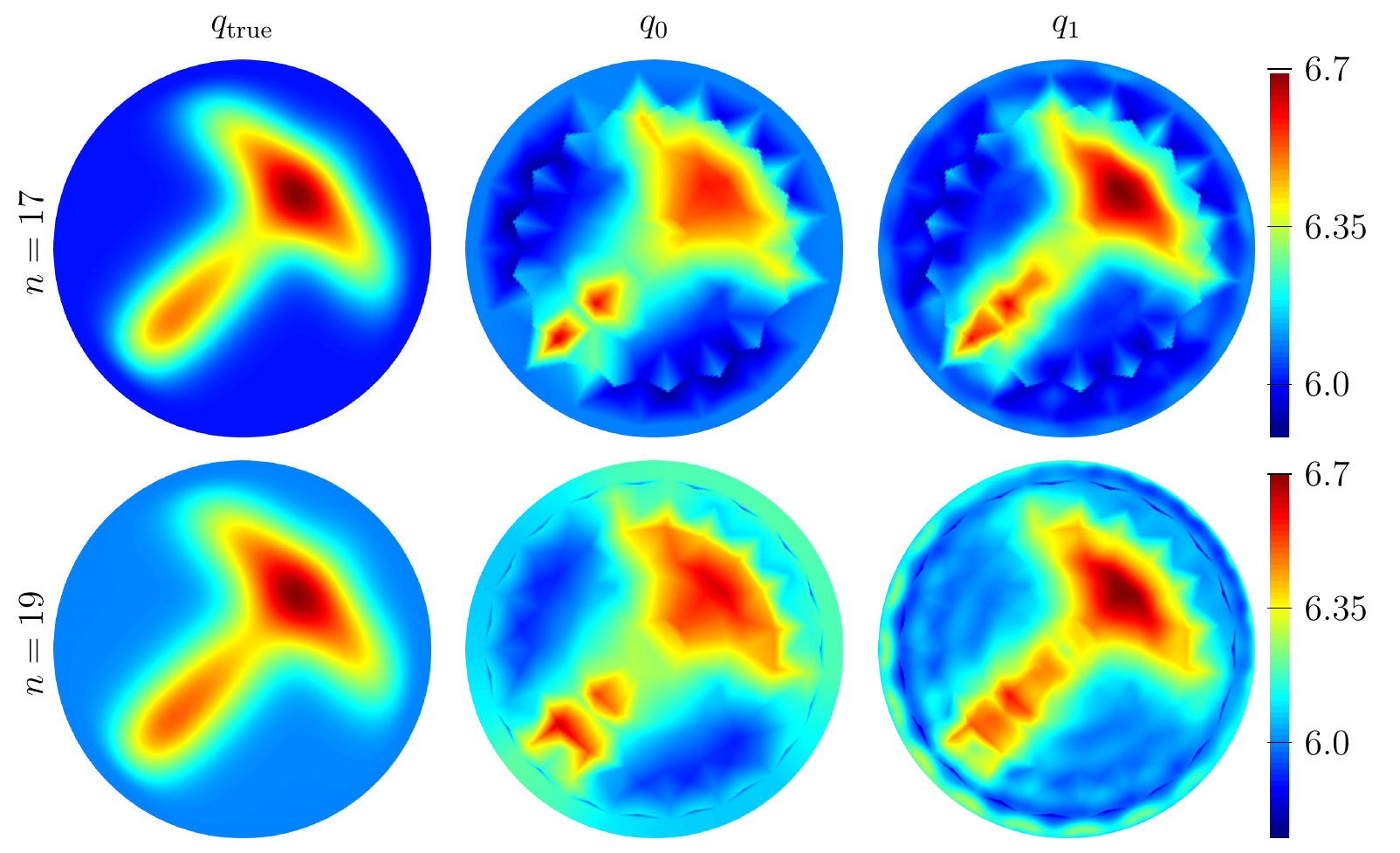}
 \end{center}
 \caption{Gauss-Newton iterates for smooth $q$ (sensitivity
 grid)}
 \label{fig:gnsm}
\end{figure}

\begin{figure}
 \begin{center}
  \includegraphics[page=2,width=\textwidth]{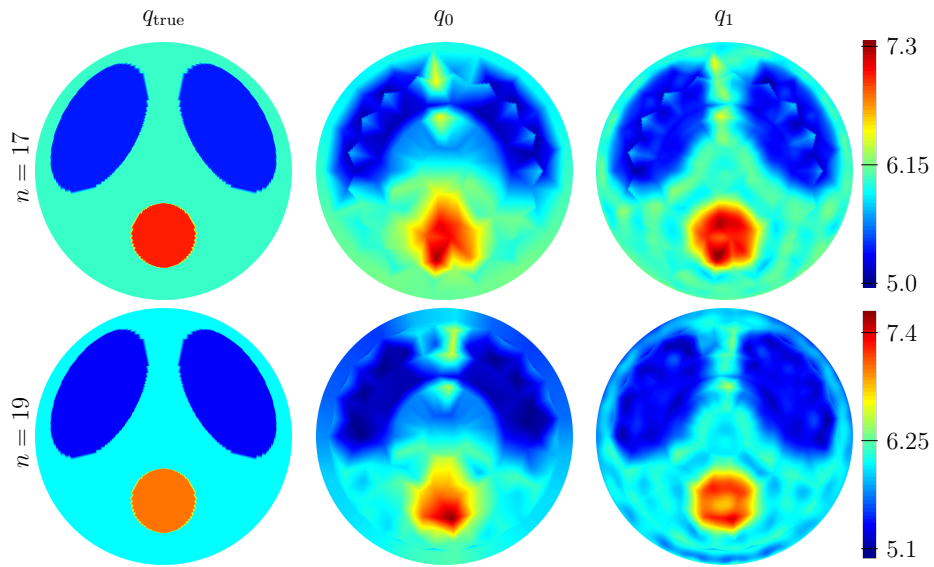}
 \end{center}
 \caption{Gauss-Newton iterates for piecewise constant $q$ (sensitivity
 grid)}
 \label{fig:gnpc}
\end{figure}

\begin{figure}
 \begin{center}
  \includegraphics[width=0.5\textwidth]{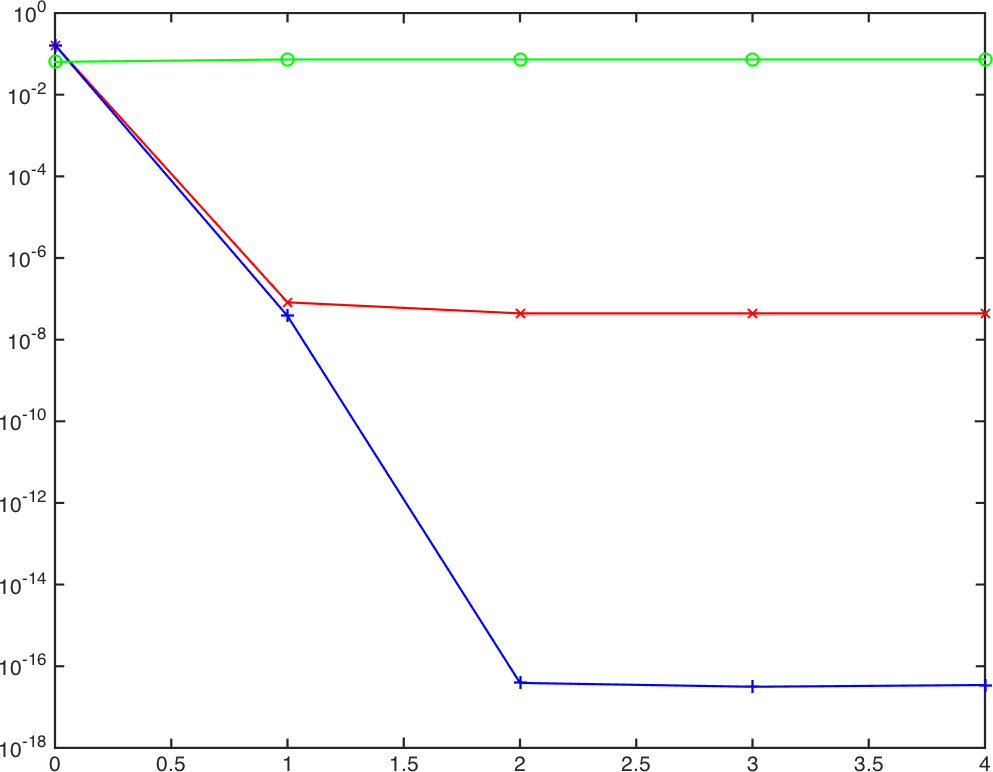}

  Iteration
 \end{center}
 \caption{A typical convergence history for the preconditioned
 Gauss-Newton method. We show convergence in terms of the
 unpreconditioned residual (green), the preconditioned residual (red)
 and the projected preconditioned residual (blue).}  \label{fig:iter}
\end{figure}

\section{Summary}
\label{sec:discussion}

We introduced and analyzed a novel inversion algorithm for determining  absorptive Schr\"odinger
potentials from measurements of the Dirichlet to Neumann map.  The method can be 
viewed in the context of parametric model reduction, with reduced models having the physical meaning
of resistor networks. Networks have been used successfully for the inverse conductivity problem. 
Here we show how to use them for Schr\"odinger's equation. The connection is made by a new 
discrete generalized Liouville identity, defined on the line graph of the network. The set 
of nodes of the line graph, where  the potential is discretized, is isomorphic to the set of edges 
of the network, where the conductivities are defined. 

We use the discrete Liouville identity to obtain 
an approximate inverse of the forward map, and formulate   
a preconditioned Gauss-Newton 
iteration for reconstructing the continuum Schr\"odinger potential. The method is 
superior to traditional output least squares because it converges quickly, in one step, it is computationally inexpensive and gives 
good quantitative results.

\section*{Acknowledgements}
LB acknowledges partial support from ONR Grant N00014-14-1-0077 and NSF grant DMS-1510429. The
work of FGV was partially supported by the National Science Foundation
grant DMS-1411577.

\appendix
\section{Choosing the size of the network}
\label{app:lump}

If the measurements of $\Lambda_{1,q}$ are tainted with noise,
we regularize the inversion by reducing  the size of the network i.e.,
the number $n$ of its boundary nodes, at the expense of resolution. Suppose that we start with a
large number $N$ of such points. Due to the instability of the inverse
problem and the noise, we may get negative resistors at step (1)
in the calculation of the mapping $\vQ$. Thus, we 
reduce the number of boundary nodes until we obtain positive resistors. We call the 
largest such number $n$ and note that it  
depends on the noise level. 

The reduction of the data during this process  is done by lumping nearby measurements.
 Instead of working with the $N$ lumping functions 
$\phi_1,\ldots,\phi_N$, we use the  fewer $\psi_1,\ldots,\psi_n$ 
defined by
\[
 \psi_i = \sum_{j\in S_i} \alpha_{i,j} \phi_j,~\mbox{for}~i=1,\ldots,n.
\]
Here the $S_i \neq \emptyset$ are $n$ disjoint subsets of
$\{1,\ldots,N\}$ corresponding to consecutive $\phi_j$ in
$\partial\Omega$ and $\alpha_{i,j}$ are positive weights with
$\sum_{j \in S_i} \alpha_{i,j} = 1$ so that we also have
$\int_{\partial\Omega}\psi_i = 1$. With the lumping we get the
off-diagonal elements of a smaller $n\times n$ data matrix $\vM'$ from
$\vM$. The diagonal elements of $\vM'$ are obtained from the
off-diagonal ones by enforcing conservation of currents,  so we can
guarantee \cite{Borcea:2008:EIT} (at least in the noiseless case) that
there is a unique resistor network with graph $C\big(n(n-1)/2,n\big)$ 
and DtN map $\vM'$.

The lumping has two regularizing effects. The first  is that
by summing more measurements of $\Lambda_{1,q}$ we get noise cancellation. The second is that the estimation of the resistors in a
network from its DtN map is more stable if the network is smaller.

\bibliographystyle{abbrvnat}
\bibliography{schroe}
\end{document}